\newcommand{\Aut}{\operatorname{Aut}}
\newcommand{\Z}{{\mathbb Z}}
\newcommand{\F}{{\mathbb F}}
\newcommand{\E}{{\mathcal E}}
\newcommand{\End}{\operatorname{End}}
\renewcommand{\Im}{\operatorname{Im}}
\newcommand{\id}{\operatorname{id}}
\newtheorem{theorem}{Theorem}[section]
\newtheorem{lemma}[theorem]{Lemma}
\newtheorem{corollary}[theorem]{Corollary}
\newtheorem{proposition}[theorem]{Proposition}
\newtheorem{question}[theorem]{Question}
\theoremstyle{definition}
	\theoremstyle{definition}
	\newtheorem{rem&notation}[theorem]{Remark and notation}
\newtheorem{remark}[theorem]{Remark}
\newtheorem{example}[theorem]{Example}
\theoremstyle{definition}
\newtheorem{definition}[theorem]{Definition}
\numberwithin{equation}{section}
\title[Stable homotopy classes of self-maps of polyhedra]{The ring of stable homotopy classes of self-maps of $A_n^2$-polyhedra}
\author{David M\'endez}
\address{School of Mathematical Sciences, University of Southampton, SO17 1BJ, United Kingdom}
\email{D.Mendez-Martinez@soton.ac.uk}
\thanks{This work was supported by The Alan Turing Institute under the EPSRC grant EP/N510129/1. The author was partially supported by Ministerio de Econom\'ia y Competitividad (Spain) grants MTM2016-79661-P and MTM2016-78647-P. The author is thankful to C.~Costoya and A.~Viruel for the fruitful discussions we had on the contents of this paper}
   \def\MR#1{}
\begin{document}

\subjclass[2020]{20K30, 55P10, 55P15, 55P42}
\keywords{$A_n^2$-polyhedra, stable self-homotopy equivalences}

\begin{abstract}
	We raise the problem of realisability of rings as $\{X,X\}$ the ring of stable homotopy classes of self-maps of a space $X$. By focusing on $A_n^2$-polyhedra, we show that the direct sum of three endomorphism rings of abelian groups, one of which must be free, is realisable as $\{X,X\}$ modulo the acyclic maps. We also show that $\F_p^3$ is not realisable in the setting of finite type $A_n^2$-polyhedra, for $p$ any prime.
\end{abstract}

\maketitle


\section{Introduction}\label{sec:intro}

Let $X$ be a space and let $\E(X)$ denote the group of homotopy classes of self-homotopy equivalences of $X$. Many problems related to $\E(X)$ have been extensively studied, but among them, the question of realisability has received the most attention. This problem was proposed by Kahn in the 60s, and asks if every group appears as the group of self-homotopy equivalences of a topological space. It has been placed first to solve in \cite{Ark01} and has been looked into by many authors, \cite{Ark90,Kah76,Kah90,Rut97}. Although progress towards a solution to Kahn's problem was slow in the early years, it was recently proven that finite groups are realisable as groups of self-homotopy equivalences of rational spaces, \cite{CosVir14}.  

The group $\E(X)$ can also be regarded as the group of invertible elements of the monoid $[X,X]$ of homotopy classes of self-maps of $X$. Thus, Kahn's problem can be thought of as a particular case of the problem of realising monoids as $[X,X]$ for some space $X$. In \cite{CosMenVir19} we are able to use refinements to the constructions in \cite{CosMenVir18,CosVir14} to show that finite monoids with zero and with no non-trivial zero divisors are so realisable, \cite[Corollary 1.6]{CosMenVir19}.

In this paper we settle our interest in these realisability problems in the stable setting. Of course, the algebraic structures above can be defined for stable homotopy classes of maps. Furthermore, the monoid of stable self-maps of a space $X$, denoted $\{X,X\}$, admits a natural ring structure where the additive operation is the addition of maps coming from suspensions. Such ring is usually denoted by $\End(X)$. Consequently, the group of stable self-homotopy equivalences of $X$, denoted $\E^S(X)$, is just the group of units of $\End(X)$, and both of these algebraic structures associated to spectra have been studied by several authors, \cite{
Kah76,Joh72,Pav02,Pav10}. 

In our case, following the discussion above, we are interested in the following realisability question for the ring $\End(X)$.

\begin{question}\label{question}
	Which rings are realisable as $\End(X)$ or as some of its distinguished subgroups or quotients for $X$ a topological space?
\end{question}

This question has been extensively studied for the quotient of $\End(X)$ by its Jacobson radical, denoted $\operatorname{Rad}$, particularly in the case of finite $p$-torsion spectra. In such case, it is known that $\End(X)/\operatorname{Rad}$ is isomorphic to a countable product of matrix algebras over finite fields, \cite{Hub89}, and that if $X$ is atomic then such quotient is a finite field of characteristic $p$, \cite{AdaKuh89}. Furthermore, every finite product of finite algebras over finite fields of characteristic $p$ is so realisable, \cite{CraHubXu92}. 

These results are generally obtained by taking a careful look into the representation of $\End(X)$ in the homology groups of $X$, which hints at the importance of homology computations in a solution to Question \ref{question}. Following this lead, we turn to ideas of algebraic homotopy introduced by J.\ H.\ C.\ Whitehead and Baues. 

Namely, we know that homotopy types of certain polyhedra can be studied by purely algebraic tools in which the homology groups play a key role, \cite{Bau95}. In \cite{CosMenVir20} we used these ideas to study self-homotopy equivalences of $A_n^2$-polyhedra, $(n-1)$-connected $(n+2)$-dimensional CW-complexes, in the unstable setting. And it turns out that the results we used can be extended to the stable framework.

Indeed, the algebraic classification of these objects, introduced in \cite{Bau91,Whi50}, is provided by a functor $\Gamma$ from the homotopy category of $A_n^2$-polyhedra to a category whose objects are exact sequences of groups involving the homology groups of the polyhedra, see Definition \ref{definition:gammaSequencesn+2}. Then, if $X$ is an $A_n^2$-polyhedron, $\End\big(\Gamma(X)\big)$ can be endowed with a natural ring structure, for which we can prove the following.

\begin{proposition}\label{proposition:isoGammaSeq}
	Let $X$ be an $A_n^2$-polyhedron in the stable range. The map $H\colon\End(X)\to\End\big(\Gamma(X)\big)$ defined as $H(f) = \big(H_{n+2}(f),H_{n+1}(f), H_n(f)\big)\in\End\big(\Gamma(X)\big)$ (see Definition \ref{definition:homH}) is an epimorphism whose kernel consists in the homologically trivial self-maps of $X$.
\end{proposition}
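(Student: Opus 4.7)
The plan is to verify three things in sequence: that $H$ is a ring homomorphism, that its kernel consists of the homologically trivial self-maps, and that it is surjective. Of these, the first two are essentially formal; the third is where the substance lies.

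For the ring homomorphism property, the target $\End(\Gamma(X))$ has composition and addition defined componentwise on the triples $(H_{n+2}(f), H_{n+1}(f), H_n(f))$, subject to compatibility with the structure maps of the $\Gamma$-sequence. Functoriality of homology gives multiplicativity and $H(\id_X) = \id_{\Gamma(X)}$; additivity follows from the fact that in the stable range the sum on $\End(X)$ arises from the suspension structure and homology is an additive functor on spectra, so $H_i(f+g) = H_i(f) + H_i(g)$. Naturality of the connecting maps of the $\Gamma$-sequence ensures the resulting triple lies in $\End(\Gamma(X))$.

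For the kernel description, note that an $A_n^2$-polyhedron has non-trivial reduced homology only in dimensions $n$, $n+1$ and $n+2$. Hence $H(f) = 0$ is equivalent to $H_i(f) = 0$ for all $i$, i.e.\ $f$ is homologically trivial. This step is essentially tautological once the definitions are in place.

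The main obstacle is surjectivity, and the plan here is to invoke a stable version of Baues's classification theorem: the functor $\Gamma$ from the stable homotopy category of $A_n^2$-polyhedra to the category of $\Gamma$-sequences is full. Applied with source and target both equal to $X$, this yields that every triple $(\alpha_{n+2}, \alpha_{n+1}, \alpha_n) \in \End(\Gamma(X))$ is realised by some stable self-map. Under the hood, the proof of fullness proceeds by an obstruction-theoretic argument along the cellular filtration $X^{(n)} \subset X^{(n+1)} \subset X$: one first lifts $\alpha_n$ to a cellular map of $n$-skeleta, then extends over the $(n+1)$-cells using the compatibility of $(\alpha_{n+1},\alpha_n)$ with the attaching data encoded by the connecting morphism of the $\Gamma$-sequence, and finally extends across the top-dimensional cells. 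The hard part is verifying that the relevant obstruction classes vanish: in the stable range the quadratic Whitehead functor $\Gamma_n$ simplifies (becoming essentially $\Z/2$-linear) and the exactness built into the $\Gamma$-sequence precisely encodes that the intermediate obstructions die. This is what makes the stable result cleaner than its unstable counterpart in \cite{CosMenVir20}.
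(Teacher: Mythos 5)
Your proposal is correct and follows the paper's own route: the paper likewise obtains surjectivity by citing Baues's classification theorem (fullness of $\Gamma$, which applies to stable self-maps since $[X,X]=\{X,X\}$ in the stable range), identifies the kernel tautologically with the homologically trivial maps because the reduced homology of an $A_n^2$-polyhedron is concentrated in degrees $n$, $n+1$, $n+2$, and gets additivity from $H_*(f+g)=H_*(f)+H_*(g)$ for the suspension-induced sum. Your extra sketch of the obstruction-theoretic proof of fullness is not needed (the paper simply cites Baues) but does not affect correctness.
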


Consequently, we can use the $\Gamma$ functor to algebraically study Question \ref{question} for the quotient of $\End(X)$ by the homologically trivial maps. For example, using Moore spaces we are able to show that endomorphism rings of abelian groups are realisable as $\End\big(\Gamma(X)\big)$ for $X$ an $A_n^2$-polyhedron, Proposition \ref{proposition:end1}. This contrasts with results on the entire ring $\End(X)$, particularly in the case of groups with $2$-torsion, see \cite[\textsection2.3]{Pav10}. We are later able to extend this result to the direct product of any two endomorphism rings of abelian groups, Proposition \ref{proposition:end2}. 

These two realisability results  for endomorphism rings are obtained by avoiding interaction between the homology groups of $X$ in the exact sequence $\Gamma(X)$. We then observe that we can make these groups interact to realise a pullback of certain subrings of $\End\big(H_n(X)\big)$ and $\End\big(H_{n+2}(X)\big)$, see Proposition \ref{proposition:pullback}. Finally, using ingredients in the results above we show our main realisability result:

\begin{theorem}\label{theorem:end3}
	Let $R$ be a ring such that $R\cong\End(G_1)\times\End(G_2)\times\End(G_3)$ for abelian groups $G_1$, $G_2$ and $G_3$ such that $G_1$ is free. Then, there is an $A_n^2$-polyhedron in the stable range $X$ such that $\End\big(\Gamma(X)\big)\cong R$.
\end{theorem}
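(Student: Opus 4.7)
My plan is to combine the three realisation techniques developed earlier in the paper: the Moore space construction of Proposition \ref{proposition:end1}, the non-interaction wedge idea behind Proposition \ref{proposition:end2}, and the interaction-pullback of Proposition \ref{proposition:pullback}. The role of the freeness hypothesis on $G_1$ will be to place $G_1$ in a homological position where it can be cleanly separated from the other two summands, so that the construction for $\End(G_2)\times\End(G_3)$ can use the pullback machinery without interference.

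Concretely, I would build $X$ as a wedge $X=X_1\vee X_2$ of two $A_n^2$-polyhedra in the stable range. For $X_1$, I would take a wedge of spheres that is a Moore space for $G_1$ in the appropriate dimension; since $G_1$ is free, this is straightforward and $\End\bigl(\Gamma(X_1)\bigr)\cong\End(G_1)$ by Proposition \ref{proposition:end1}. For $X_2$, I would apply the pullback construction of Proposition \ref{proposition:pullback}, choosing the two subrings of $\End(G_2)$ and $\End(G_3)$ to be the whole rings, mapping into a common quotient that is arranged to be trivial (or, equivalently, choosing the interaction data so that the compatibility condition is automatically satisfied). In that degenerate case the pullback collapses to the direct product $\End(G_2)\times\End(G_3)$, and one obtains $\End\bigl(\Gamma(X_2)\bigr)\cong\End(G_2)\times\End(G_3)$.

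The remaining step is to show that the $\Gamma$-sequence of the wedge splits as the direct sum of the two component $\Gamma$-sequences in a way that makes endomorphisms split correspondingly, giving $\End\bigl(\Gamma(X)\bigr)\cong\End\bigl(\Gamma(X_1)\bigr)\times\End\bigl(\Gamma(X_2)\bigr)$. This amounts to verifying that there are no cross-endomorphisms $\Gamma(X_1)\to\Gamma(X_2)$ or $\Gamma(X_2)\to\Gamma(X_1)$ compatible with the sequence data. Here the freeness of $G_1$ is essential: it forces the relevant $\Hom$ and $\Ext$ groups involving $G_1$ to vanish (for instance, $\Ext(G_1,-)=0$), which kills the potential off-diagonal components in the endomorphism ring of the combined $\Gamma$-sequence.

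The step I expect to be the main obstacle is the last one: controlling the cross-terms so that the endomorphism ring really splits as a product. It is easy to arrange the $\Gamma$-sequence of the wedge to be the direct sum of the two sequences, but endomorphisms of a direct sum of exact sequences of abelian groups need not split as a product of endomorphism rings; one must use the freeness of $G_1$ together with the positions chosen for the various groups to rule out any compatible off-diagonal map. A subsidiary difficulty is checking that the ``collapsed pullback'' configuration of Proposition \ref{proposition:pullback} is actually attainable with the ingredients given, i.e.\ that one can genuinely realise $\End(G_2)\times\End(G_3)$ rather than a proper subring of it.
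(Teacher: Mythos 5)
Your overall strategy --- isolate $G_1$ from a configuration realising $\End(G_2)\times\End(G_3)$ --- is the right one, and in fact the paper's construction is exactly the $\Gamma$-sequence of the wedge of a wedge of $(n+2)$-spheres with the space of Proposition \ref{proposition:end2}: it takes the single sequence $G_1\xrightarrow{\;0\;}G_2\otimes\Z/2\to(G_2\otimes\Z/2)\times G_3\to G_3\to 0$ with the cokernel sequence split. But your proposal has two genuine gaps. The decisive one is where $G_1$ is placed. You invoke Proposition \ref{proposition:end1}, which realises $\End(G_1)$ via $M(G_1,n+1)$, i.e.\ with $G_1=H_{n+1}(X_1)$; wedging this with the space of Proposition \ref{proposition:end2} gives $H_{n+1}(X)=G_1\oplus G_3$, and the ``cross-terms'' you worry about are then the off-diagonal blocks $\Hom(G_1,G_3)$ and $\Hom(G_3,G_1)$ inside $\End(G_1\oplus G_3)$. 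Your proposed mechanism for killing them --- that freeness of $G_1$ forces the relevant $\Hom$ and $\Ext$ groups to vanish --- fails: $\Ext(G_1,-)=0$ indeed, but $\Hom(G_1,-)$ is as far from zero as possible for a free nonzero $G_1$ (e.g.\ $G_1=G_3=\Z$ yields $\End(G_1\oplus G_3)=M_2(\Z)$, and since $h_{n+1}$ is split surjective every such matrix lifts to a compatible $\Omega$). So that wedge realises $\End(G_1\oplus G_3)\times\End(G_2)$, not the target ring. The correct move, and the point where freeness is actually used, is to place $G_1$ in degree $n+2$ (where the $\Gamma$-sequence formalism \emph{requires} a free abelian group) with $b_{n+2}=0$. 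Then no cross-terms can arise for a structural reason that has nothing to do with vanishing of $\Hom$: a $\Gamma$-morphism is by definition a triple of maps acting degreewise on $H_{n+2}$, $H_{n+1}$, $H_n$, so once the three groups occupy three distinct homology degrees, $\End\big(\Gamma(X)\big)$ is automatically a subring of $\End(G_1)\times\End(G_3)\times\End(G_2)$ and one only has to exhibit a compatible $\Omega$ for each triple.

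The second gap is your use of Proposition \ref{proposition:pullback} for $X_2$. That proposition requires $H_{n+1}(X)=0$, so both groups must sit in degrees $n+2$ and $n$, and $H_{n+2}$ must be free abelian; since neither $G_2$ nor $G_3$ is assumed free, the ``collapsed pullback'' configuration is not attainable with the given ingredients. The piece realising $\End(G_2)\times\End(G_3)$ has to be the split-sequence construction of Proposition \ref{proposition:end2} (with $G_2=H_n$ and $G_3=H_{n+1}$), not the pullback of Proposition \ref{proposition:pullback}.
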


Therefore, many interesting rings are realisable as $\End\big(\Gamma(X)\big)$ for $X$ an $A_n^2$-polyhedron. However, rings such as $\F_p^3$ for $p$ prime are not realisable using Theorem \ref{theorem:end3}, a fact that can easily be deduced from Remark \ref{remark:negativeRealisability}. And indeed, a careful examination of the algebraic objects associated to $A_n^2$-polyhedra leads us to conclude that this context may not be suitable to obtain a complete positive answer to Question \ref{question} for the considered quotient, at least in the setting of finite type $A_n^2$-polyhedra. Namely, we prove:
 
\begin{theorem}\label{theorem:main}
	Let $p$ be a prime. An $A_n^2$-polyhedron of finite type in the stable range such that $\End\big(\Gamma(X)\big)\cong\F_p^3$ does not exist.
\end{theorem}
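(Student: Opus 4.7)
The plan is to argue by contradiction. Suppose a finite type $A_n^2$-polyhedron $X$ in the stable range satisfies $\End(\Gamma(X))\cong\F_p^3$. Since $X$ has dimension $n+2$ and is of finite type, $H_{n+2}(X)$ is a finitely generated free abelian group. The unital ring homomorphism $\End(\Gamma(X))\to\End(H_{n+2}(X))$, $f\mapsto H_{n+2}(f)$, has image a quotient of $\F_p^3$, so the image has characteristic dividing $p$. Consequently $p\cdot\id_{H_{n+2}(X)}=0$ in $\End(H_{n+2}(X))$; since $H_{n+2}(X)$ is free abelian, this forces $H_{n+2}(X)=0$. Applying the same reasoning to the projections onto $\End(H_n(X))$ and $\End(H_{n+1}(X))$ shows that both groups are annihilated by $p$, hence are finite-dimensional $\F_p$-vector spaces.

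The three primitive idempotents of $\F_p^3$ then produce a direct sum decomposition $\Gamma(X)=\Gamma_1\oplus\Gamma_2\oplus\Gamma_3$ of subsequences for which $\End(\Gamma_i)\cong\F_p$ and $\Hom(\Gamma_i,\Gamma_j)=0$ whenever $i\neq j$. The aim becomes to prove that no such configuration of three summands can exist. Since $H_{n+2}^{(i)}=0$, each $\Gamma_i$ reduces to the extension $0\to H_n^{(i)}\otimes\Z/2\to\pi_{n+1}^{(i)}\to H_{n+1}^{(i)}\to 0$, and $\End(\Gamma_i)$ is the subring of $\End(H_n^{(i)})\times\End(H_{n+1}^{(i)})$ consisting of pairs that preserve the extension class. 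A direct analysis should then show that $\End(\Gamma_i)\cong\F_p$ forces $\Gamma_i$ to be of one of three types: (a) $H_n^{(i)}=\Z/p$ and $H_{n+1}^{(i)}=0$; (b) $H_n^{(i)}=0$ and $H_{n+1}^{(i)}=\Z/p$; or, only when $p=2$, (c) $H_n^{(i)}=H_{n+1}^{(i)}=\Z/2$ with the nontrivial extension $\pi_{n+1}^{(i)}=\Z/4$.

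The contradiction arises by computing $\Hom(\Gamma_i,\Gamma_j)$ in both directions for each pair of types. Two summands of the same type always have nonzero $\Hom$: the cases (a)--(a) and (b)--(b) are immediate, and for (c)--(c) the extension-preservation condition forces the pair to be diagonal, still yielding $\F_2$. In the mixed cases involving a summand of type (c), the trivial extension on one side makes the compatibility vacuous in one direction, giving $\Hom(\Gamma_{(a)},\Gamma_{(c)})\cong\F_2$ and $\Hom(\Gamma_{(c)},\Gamma_{(b)})\cong\F_2$ (the reverse directions do vanish). Only the pair consisting of a type (a) and a type (b) summand has $\Hom=0$ in both directions, so any valid decomposition contains at most one summand of type (a) and one of type (b)---never three---yielding the desired contradiction.

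The main obstacle is expected to be the classification step, where one must rule out exotic summands whose homology has larger $\F_p$-rank but whose endomorphism ring still collapses to $\F_p$ because of a carefully chosen extension class. Concretely, this reduces to analysing the subring of $M_a(\F_p)\times M_b(\F_p)$ cut out by a relation of the form $\gamma M=M\delta$, where $M$ is a matrix representative of the extension class in $\operatorname{Ext}^1(H_{n+1}^{(i)},H_n^{(i)}\otimes\Z/2)$, and proving that this subring can only be isomorphic to $\F_p$ when $a,b\leq 1$.
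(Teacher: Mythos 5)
Your route is genuinely different from the paper's and, once completed, it works; but as written it has one real gap and one step that is asserted rather than proved. The paper argues in two unrelated ways: for odd $p$ it uses splitting lemmas to separate the $p$-torsion of $H_n$ and $H_{n+1}$ from the $2$-torsion and reduces to the non-realisability of $\F_p^2,\F_p^3$ as $\End(A)$ for $A$ a $p$-group; for $p=2$ it invokes the external classification from \cite{CosMenVir20} of the four $\Gamma$-sequences with trivial automorphism group. Your idempotent decomposition treats all primes uniformly and is self-contained, and your opening step (that $p\cdot\id=0$ in $\End\big(\Gamma(X)\big)$ forces $H_{n+2}(X)=0$ and makes $H_n,H_{n+1}$ finite elementary abelian $p$-groups) is cleaner than the paper's appeal to Johnston's theorem.

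The two points to address. First, the claim that the primitive idempotents ``produce a direct sum decomposition'' of the $\Gamma$-sequence is not automatic, because a $\Gamma$-morphism only requires the \emph{existence} of some compatible $\Omega$, and the chosen $\Omega$'s need not be orthogonal idempotents on $\pi_{n+1}$. What saves you is that the compatibility condition for each idempotent $e^{(i)}$ forces the extension class $\xi\in\operatorname{Ext}\big(H_{n+1},H_n\otimes\Z/2\big)=\bigoplus_{j,k}\operatorname{Ext}\big(H_{n+1}^{(k)},H_n^{(j)}\otimes\Z/2\big)$ to satisfy $\bar e^{(i)}_*\xi=(e^{(i)})^*\xi$, which kills every off-diagonal component $\xi_{jk}$, $j\ne k$; this needs to be said. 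Second, the classification of summands with $\End(\Gamma_i)\cong\F_p$ is the heart of the proof and you explicitly defer it. It does close, and more easily than you fear: writing the extension class as $M\in M_{b\times a}(\F_p)$ (for $p$ odd, $M=0$ forcibly since $H_n^{(i)}\otimes\Z/2=0$), if $M$ is not surjective there is $N\ne 0$ with $NM=0$ and then $(\id+N,\id)$ is an extra element; dually if $M$ is not injective. So either one of $a,b$ is $0$ and $\End(\Gamma_i)\cong M_a(\F_p)$ or $M_b(\F_p)$, or $M$ is invertible and $\End(\Gamma_i)\cong M_a(\F_p)$ via $\gamma\mapsto(\gamma,M^{-1}\gamma M)$; in every case $\End(\Gamma_i)\cong\F_p$ forces $a,b\le 1$, giving exactly your types (a), (b) and (for $p=2$) (c). With that supplied, your $\Hom$ computations are correct (including the asymmetry $\Hom(\Gamma_{(a)},\Gamma_{(c)})\cong\F_2$ but $\Hom(\Gamma_{(c)},\Gamma_{(a)})=0$, since any map $\Z/4\to\Z/2$ kills $2\Z/4$), and the conclusion that no three summands are pairwise $\Hom$-orthogonal follows. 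So: a viable and arguably more systematic alternative, but the decomposition lemma and the rank bound $a,b\le 1$ must actually be proved for the argument to stand.
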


\section{\texorpdfstring{$\Gamma$}{Gamma}-sequences and the ring of stable homotopy classes of self-maps}\label{sec:classification}

Recall that an $A_n^2$-polyhedron is an $(n+2)$-dimensional $(n-1)$-connected CW-complex. The homotopy types of these spaces have been classified by  Whitehead, \cite{Whi50}, and Baues, \cite{Bau91}. In this section we review how such classification can be used to study the ring of stable homotopy classes of self-maps of $A_n^2$-polyhedra.

Denote the homotopy category of $A_n^2$-polyhedra by $A_n^2$. Recall from \cite[10.2]{Bau95} that, as a consequence of Freudenthal's suspension theorem, there is a sequence of equivalences of categories
\[A_4^2\xrightarrow{\ \Sigma\ } A_5^2\xrightarrow{\ \Sigma\ }\cdots\xrightarrow{\ \Sigma\ } A_n^2\xrightarrow{\ \Sigma\ }A_{n+1}^2\xrightarrow{\ \Sigma\ }\cdots.\]
We can thus assume that $A_n^2$-polyhedra are in the stable range as long as $n\ge 4$. We will introduce the classification of homotopy types of these spaces in the range $n\ge 3$ and assume that $n\ge 4$ when we consider the stable range. We refer to \cite[\textsection2]{CosMenVir20} for a brief description of the case $n=2$ and to \cite[Ch. I, \textsection8]{Bau91} for the actual construction of the classification for every $n\ge 2$. 

Let $X$ be an $A_n^2$-polyhedron, $n\ge 3$. Consider the final part of the exact sequence of Whitehead associated to this polyhedron, \cite{Whi50}, which as a consequence of \cite[Theorem 2.1.22]{Bau96} looks as follows:
\begin{equation}\label{eq:gammaSeqX}
H_{n+2}(X)\xrightarrow{b_{n+2}} H_n(X)\otimes\Z/2\xrightarrow{\ i_n\ }\pi_{n+1}(X)\xrightarrow{h_{n+1}}H_{n+1}(X)\longrightarrow 0.
\end{equation}
Homotopy classes of $A_n^2$-polyhedra are classified by the isomorphism classes of these sort of sequences in the following category.

\begin{definition}{(\cite[Ch. IX, \textsection 4]{Bau89})}\label{definition:gammaSequencesn+2}
	Let $n\ge 3$ be an integer. The category $\Gamma$-sequences$^{n+2}$ is defined as follows. Its objects are triples of abelian groups $(H_{n+2}, H_{n+1}, H_n)$, $H_{n+2}$ free abelian, together with an exact sequence
	\[H_{n+2}\longrightarrow H_n\otimes\Z/2\longrightarrow\pi_{n+1}\longrightarrow H_{n+1}\longrightarrow 0.\]
	Its morphisms are triples of homomorphisms $f=(f_{n+2}, f_{n+1}, f_n)$, $f_i\colon H_i\to H_i'$, such that there is a group homomorphism $\Omega\colon\pi_{n+1}\to\pi_{n+1}'$ giving raise to a commutative diagram
	\begin{center}
		\begin{tikzpicture}
			\tikzset{node distance=0.15\textwidth, auto}
			\node(1) {$H_{n+2}$};
			\node[right of=1, xshift=0.9cm] (2) {$H_n\otimes\Z/2$};
			\node[right of=2, xshift=0.5cm] (3) {$\pi_{n+1}$};
			\node[right of=3, xshift=0.8cm] (4) {$H_{n+1}$};
			\node[right of=4] (5) {$0$};
			\node[below of=1, yshift=0.06cm] (6) {$H'_{n+2}$};
			\node[below of=2, yshift=0.06cm] (7) {$H'_n\otimes\Z/2$};
			\node[below of=3, yshift=0.06cm] (8) {$\pi'_{n+1}$};
			\node[below of=4, yshift=0.06cm] (9) {$H'_{n+1}$};
			\node[below of=5, yshift=0.06cm] (10) {$0.$};
			\draw[->,swap](1) to node {} (2);
			\draw[->](2) to node {} (3);
			\draw[->,swap](3) to node {} (4);
			\draw[->](4) to node {} (5);
			\draw[->](6) to node{} (7);
			\draw[->](7) to node {} (8);
			\draw[->](8) to node {} (9);
			\draw[->](9) to node {} (10);
			\draw[->](1) to node {$f_{n+2}$} (6);
			\draw[->] (2) to node {$f_n\otimes\Z/2$} (7);
			\draw[->] (3) to node {$\Omega$} (8);
			\draw[->] (4) to node {$f_{n+1}$} (9);
		\end{tikzpicture}
	\end{center}
	Objects and morphisms in this category are respectively called $\Gamma$-sequences and $\Gamma$-morphisms.
\end{definition}

Now, given $X$ an $A_n^2$-polyhedron, $n\ge 3$, its associated exact sequence \eqref{eq:gammaSeqX} is an object in $\Gamma$-sequences$^{n+2}$ which we call the $\Gamma$-sequence of $X$. We can then define a functor $\Gamma\colon A_n^2\to\Gamma$-sequences$^{n+2}$ as follows. To an $A_n^2$-polyhedron $X$ associate $\Gamma(X)$ the $\Gamma$-sequence of $X$, and to a continuous map $\alpha\colon X\to X'$ of $A_n^2$-polyhedra associate $\Gamma(\alpha) = \big(H_{n+2}(\alpha),H_{n+1}(\alpha),H_{n}(\alpha)\big)$, which is a $\Gamma$-morphism by defining $\Omega=\pi_{n+1}(\alpha)$. Baues proved the following result.

\begin{theorem}{\emph{(\cite[Ch. I, \textsection8]{Bau91})}}\label{theorem:classification}
	For every $n\ge 3$, the functor $\Gamma\colon A_n^2\to\Gamma$-sequences$^{n+2}$ is full, and every object in $\Gamma$-sequences$^{n+2}$ is the $\Gamma$-sequence of an $A_n^2$-polyhedron. In fact, there is a 1--1 correspondence between homotopy types of $A_n^2$-polyhedra and isomorphism classes of $\Gamma$-sequences.
\end{theorem}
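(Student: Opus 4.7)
The plan is to establish the three parts of the statement—fullness, essential surjectivity, and the resulting bijection on homotopy types—by leveraging the canonical CW-filtration of any $A_n^2$-polyhedron together with obstruction theory driven by the Whitehead exact sequence $\eqref{eq:gammaSeqX}$.

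For \textbf{fullness}, I would fix minimal CW-structures on $X$ and $Y$ in which cells appear only in dimensions $n$, $n+1$ and $n+2$, so that the $n$-skeleta are wedges of $n$-spheres with $[X^{(n)},Y^{(n)}]=\Hom(H_n(X),H_n(Y))$. Given a $\Gamma$-morphism $f=(f_{n+2},f_{n+1},f_n)$ with companion homomorphism $\Omega\colon\pi_{n+1}(X)\to\pi_{n+1}(Y)$, first realise $f_n$ as a cellular map $\varphi_n\colon X^{(n)}\to Y^{(n)}$. The obstruction to extending across each $(n+1)$-cell of $X$ lives in $\pi_n(Y^{(n)})\cong H_n(Y)$; commutativity of the square involving $f_n\otimes\Z/2$ and $f_{n+1}$ forces this obstruction to vanish, yielding $\varphi_{n+1}\colon X^{(n+1)}\to Y^{(n+1)}$. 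Extending over $(n+2)$-cells is governed by obstructions in $\pi_{n+1}(Y^{(n+1)})$, and the filler $\Omega$ is exactly the datum required to kill them, producing $\alpha\colon X\to Y$ with $\Gamma(\alpha)=f$.

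For \textbf{essential surjectivity}, given a $\Gamma$-sequence $H_{n+2}\xrightarrow{b}H_n\otimes\Z/2\xrightarrow{i}\pi_{n+1}\xrightarrow{h}H_{n+1}\to 0$, I would first build a Moore-type complex $Z$ realising $H_n$ and $H_{n+1}$ in degrees $n$ and $n+1$ by attaching $(n+1)$-cells to a wedge of $n$-spheres along a chosen presentation of $H_{n+1}$. Whitehead's calculation expresses $\pi_{n+1}(Z)$ as an extension of $H_{n+1}$ by the image of $H_n\otimes\Z/2$ under a canonical map of the same shape as $i$, so the prescribed $\pi_{n+1}$ differs from $\pi_{n+1}(Z)$ by the class of $b$. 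Attaching $(n+2)$-cells to $Z$ along a lift of $b\colon H_{n+2}\to H_n\otimes\Z/2\to\pi_{n+1}(Z)$ (using the freeness of $H_{n+2}$ to choose the lift) produces an $A_n^2$-polyhedron $X$ whose $\Gamma$-sequence is isomorphic to the prescribed one; the exactness conditions packaged in Definition \ref{definition:gammaSequencesn+2} are precisely what is required for the homology of the resulting complex to match $H_n,H_{n+1},H_{n+2}$.

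For the \textbf{1--1 correspondence}, an isomorphism $\Gamma(X)\cong\Gamma(Y)$ lifts by fullness to a map $\alpha\colon X\to Y$ inducing isomorphisms on $H_n$, $H_{n+1}$ and $H_{n+2}$; since both spaces are simply-connected CW-complexes of dimension $n+2$ that are $(n-1)$-connected, Whitehead's theorem yields $X\simeq Y$. Conversely, $\Gamma$ trivially sends homotopy equivalences to isomorphisms of $\Gamma$-sequences, so the correspondence is well defined and bijective. The main obstacle is the fullness step: identifying the cellular extension obstructions with the algebraic compatibility imposed by $\Omega$ requires careful bookkeeping of the quadratic refinement of the Hurewicz map—encoded in the $H_n\otimes\Z/2$ term—which is what makes the Baues--Whitehead analysis in \cite{Bau91,Whi50} both substantive and indispensable.
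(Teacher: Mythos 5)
The paper contains no proof of this statement: it is imported verbatim from Baues (\cite[Ch.\ I, \textsection8]{Bau91}), building on Whitehead \cite{Whi50}, so there is no in-paper argument to compare yours against. Judged on its own terms, your sketch has the right overall shape (realise $\Gamma$-morphisms skeleton by skeleton, build a complex realising a prescribed sequence, conclude the correspondence via Whitehead's theorem), but two of its steps contain genuine errors rather than mere omissions.

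First, in the essential-surjectivity step your intermediate complex $Z$ cannot exist as described: a complex of dimension $n+1$ obtained by attaching $(n+1)$-cells to a wedge of $n$-spheres has $H_{n+1}(Z)$ equal to a subgroup of a free abelian group, hence free, so it cannot realise a torsion group $H_{n+1}$ --- torsion in $H_{n+1}$ is created only by $(n+2)$-cells. Moreover, even after repairing this, attaching the $(n+2)$-cells only along a lift of $b$ through $H_n\otimes\Z/2\to\pi_{n+1}(Z)$ realises merely the quotient of the split extension, i.e.\ only $\Gamma$-sequences in which $0\to\operatorname{coker}b\to\pi_{n+1}\to H_{n+1}\to 0$ splits; the extension class is part of the data of a $\Gamma$-sequence (compare the non-split sequence $0\to\Z/2\to\Z/4\to\Z/2\to 0$ appearing in Lemma \ref{lemma:2}), and realising an arbitrary class forces the attaching maps of the $(n+2)$-cells to have nontrivial components over a free presentation of $H_{n+1}$, interleaved with the lift of $b$. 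That interleaving is precisely the substance of Baues's construction, and your sketch omits it. Second, in the fullness step, $[X^{(n)},Y^{(n)}]\cong\Hom\big(H_n(X^{(n)}),H_n(Y^{(n)})\big)$, where these are the free groups on the $n$-cells, not $\Hom\big(H_n(X),H_n(Y)\big)$, and likewise $\pi_n(Y^{(n)})$ is free abelian rather than $H_n(Y)$; the workable route is to first lift $(f_{n+2},f_{n+1},f_n)$ to a chain map of cellular chain complexes (possible since these are free over $\Z$) and then realise that chain map geometrically, the only essential obstruction arising over the top cells, where the hypothesised $\Omega$ compatible with $i_n$ and $h_{n+1}$ is what kills it. As written, your claim that commutativity of the homology squares ``forces'' the lower obstructions to vanish is unsubstantiated, and you yourself defer the top-cell bookkeeping to \cite{Bau91,Whi50} --- which is to say the core of the theorem is cited, not proved.
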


In \cite{CosMenVir20}, we used this result to show that the automorphism group of the $\Gamma$-sequence of an $A_n^2$-polyhedron is isomorphic to a distinguished quotient of its group of self-homotopy equivalences. Using similar ideas, we show here that the $\Gamma$-sequence of an $A_n^2$-polyhedron can be used to study a distinguished quotient of its ring of stable homotopy classes of self-maps.

Let $X$ be a CW-complex in the stable range and denote $\End(X)=\{X,X\}$. Then $\End(X)$ can be endowed with a ring structure where the additive operation is the addition of maps coming from suspensions and the multiplicative operation is the composition of maps. In particular, for $f, g\in\End(X)$, $H_*(f+g) = H_*(f) + H_*(g)$. This hints that we may be able to extract information about $\End(X)$ from $\End\big(\Gamma(X)\big)$ if we were to provide it with the natural ring structure arising from the group operation of the homology groups of $X$. We do so now.

\begin{definition}\label{definition:homH}
	Let $\Gamma = \big(H_{n+2}\to H_n\otimes\Z/2\to\pi_{n+1}\to H_{n+1}\big)$ be an object of $\Gamma$-sequences$^{n+2}$. The ring of endomorphisms of $\Gamma$, $\End(\Gamma)$, is the ring whose elements are the endomorphisms of $\Gamma$ and where the additive and multiplicative operations are respectively the component-wise addition and composition of maps.

Now let $X$ be an $A_n^2$-polyhedron. There is a natural ring homomorphism
\begin{align*}
	H\colon\End(X)&\longrightarrow \End\big(\Gamma(X)\big)\\
	\alpha&\longmapsto \big(H_{n+2}(\alpha), H_{n+1}(\alpha), H_n(\alpha)\big)
\end{align*}
which is surjective as a consequence of Theorem \ref{theorem:classification}.
\end{definition}

By computing the kernel of the ring homomorphism $H$, we have the following result.

\begin{proposition}[Proposition \ref{proposition:isoGammaSeq}]\label{proposition:stableSelfEquiv}
	Let $X$ be an $A_n^2$-polyhedron in the stable range. There is a ring isomorphism $\End\big(\Gamma(X)\big)\cong\End(X)/\{\alpha\in\End(X)\mid H_*(\alpha)=0\}$.
\end{proposition}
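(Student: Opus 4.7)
The proof essentially amounts to the first isomorphism theorem for rings applied to the homomorphism $H$ from Definition \ref{definition:homH}. My plan has three ingredients: verifying that $H$ is a ring homomorphism, identifying its kernel, and invoking the surjectivity already recorded in Definition \ref{definition:homH} (itself a consequence of Theorem \ref{theorem:classification}).

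First I would check that $H$ respects the two operations. Multiplicativity is essentially formal: composition on both sides is pointwise/component\-wise, and $H_i$ is a functor, so $H_i(\alpha\circ\beta)=H_i(\alpha)\circ H_i(\beta)$ for each $i\in\{n,n+1,n+2\}$, which is exactly composition in $\End(\Gamma(X))$ (that the induced $\Omega$ is compatible with composition follows from $\Omega=\pi_{n+1}(\alpha\circ\beta)=\pi_{n+1}(\alpha)\circ\pi_{n+1}(\beta)$). For additivity, I would use that $X$ lies in the stable range: the sum $\alpha+\beta$ in $\End(X)=\{X,X\}$ is built from the co-$H$-space structure on a suspension, and the homology functor sends this sum to the group operation on $\Hom(H_i(X),H_i(X))$, so $H_i(\alpha+\beta)=H_i(\alpha)+H_i(\beta)$ componentwise. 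The ring unit $\id_X$ maps to the triple of identities, which is the unit of $\End(\Gamma(X))$ by definition.

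Next I would compute $\ker H$. By definition, $H(\alpha)=0$ in $\End(\Gamma(X))$ means that each of $H_{n+2}(\alpha)$, $H_{n+1}(\alpha)$ and $H_n(\alpha)$ is zero. Since $X$ is an $A_n^2$-polyhedron, it is $(n-1)$-connected and $(n+2)$-dimensional, so $H_i(X)=0$ for $i\notin\{n,n+1,n+2\}$; consequently $H_i(\alpha)=0$ for such $i$ automatically. Therefore $H(\alpha)=0$ if and only if $H_*(\alpha)=0$, identifying the kernel with the ideal of homologically trivial self-maps.

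Finally, Theorem \ref{theorem:classification} asserts that $\Gamma$ is full, so every $\Gamma$-endomorphism of $\Gamma(X)$ lifts to a self-map of $X$; this gives surjectivity of $H$. The first isomorphism theorem for rings then yields the claimed isomorphism $\End(\Gamma(X))\cong\End(X)/\{\alpha\in\End(X)\mid H_*(\alpha)=0\}$. The only step I see as potentially delicate is the additivity of $H$, because it relies on the stable-range hypothesis to match the suspension-derived addition on $\{X,X\}$ with the pointwise addition on homology homomorphisms; once one unpacks that the Hurewicz-type maps and homology functor are additive with respect to co-$H$-structures, everything else is bookkeeping.
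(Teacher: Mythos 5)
Your proposal is correct and follows essentially the same route as the paper, which treats the result as an immediate consequence of the first isomorphism theorem applied to the surjective ring homomorphism $H$ of Definition \ref{definition:homH} (surjectivity coming from fullness of $\Gamma$ in Theorem \ref{theorem:classification}, additivity from the stable-range addition on $\{X,X\}$, and the kernel being identified with the homologically trivial self-maps since $X$ has no reduced homology outside degrees $n$, $n+1$, $n+2$). Your write-up merely makes explicit the bookkeeping the paper leaves implicit.
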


\begin{remark}\label{remark:CMV2applicable}
	Note that the group of units of $\End\big(\Gamma(X)\big)$ is precisely the group of $\Gamma$-automorphisms of $\Gamma(X)$, which in \cite{CosMenVir20} we denoted by $\mathcal{B}^{n+2}(X)$. As a consequence of \cite[Proposition 2.5]{CosMenVir20}, this group is isomorphic to $\E(X)/\E_*(X)$. Therefore, the results obtained in \cite{CosMenVir20} are applicable to the group of units of $\End\big(\Gamma(X)\big)$, and can thus be immediately reformulated in terms of the stable group of self-homotopy equivalences. 
\end{remark}

\section{Realisability of rings as \texorpdfstring{$\End\big(\Gamma(X)\big)$}{End(Gamma(X))}}

In this section we prove our main realisability results. We begin by showing that we can use Moore spaces to realise any endomorphism ring of an abelian group as $\End\big(\Gamma(X)\big)$ for an $A_n^2$-polyhedron $X$. Note that Moore spaces also allowed us to realise automorphism groups as $\Aut\big(\Gamma(X)\big)$ in \cite[Example 3.3]{CosMenVir20}. 

\begin{proposition}\label{proposition:end1}
	Let $R$ be a ring such that $R\cong \End(G)$ for some abelian group $G$. Then, there is an $A_n^2$-polyhedron in the stable range such that $\End\big(\Gamma(X)\big)\cong R$. 
\end{proposition}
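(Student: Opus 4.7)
The plan is to realise $R \cong \End(G)$ by constructing an $A_n^2$-polyhedron whose $\Gamma$-sequence concentrates all nontrivial data in the middle position, so that morphisms of the $\Gamma$-sequence reduce to endomorphisms of $G$. The natural candidate is a Moore space of type $(G, n+1)$ for some $n \ge 4$. Take any free resolution $0 \to F_1 \to F_0 \to G \to 0$, and define $X = M(G, n+1)$ as the mapping cone of the induced map $\bigvee_{F_1} S^{n+1} \to \bigvee_{F_0} S^{n+1}$. Then $X$ is an $(n+2)$-dimensional $n$-connected CW-complex, hence an $A_n^2$-polyhedron in the stable range.

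The next step is to read off the $\Gamma$-sequence of $X$. By construction, the reduced homology of $X$ is concentrated in degree $n+1$, where $H_{n+1}(X) \cong G$, while $H_n(X) = 0$ and $H_{n+2}(X) = 0$. Since $X$ is $n$-connected, the Hurewicz homomorphism gives an isomorphism $\pi_{n+1}(X) \cong H_{n+1}(X) \cong G$. Therefore the sequence \eqref{eq:gammaSeqX} collapses to
\[
0 \longrightarrow 0 \longrightarrow G \xrightarrow{\ \id\ } G \longrightarrow 0,
\]
and the freeness requirement on $H_{n+2}$ is trivially satisfied.

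Finally, I would verify that $\End\bigl(\Gamma(X)\bigr) \cong \End(G)$ as rings. A $\Gamma$-endomorphism of the above sequence is a triple $(f_{n+2}, f_{n+1}, f_n) = (0, f_{n+1}, 0)$ together with a compatible $\Omega\colon \pi_{n+1}(X) \to \pi_{n+1}(X)$; commutativity of the right-hand square with the identity Hurewicz map forces $\Omega = f_{n+1}$ under the identification $\pi_{n+1}(X) \cong G$, while the remaining squares are automatically commutative. Hence the assignment $(0, f_{n+1}, 0) \mapsto f_{n+1}$ is a bijection, and by Definition \ref{definition:homH} it is compatible with component-wise addition and composition, yielding a ring isomorphism $\End\bigl(\Gamma(X)\bigr) \cong \End(G) \cong R$.

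There is no serious obstacle here; the construction is deliberately chosen so that two of the three homology groups entering the $\Gamma$-sequence vanish, leaving no room for interaction. The only point to check carefully is that the Moore space $M(G, n+1)$ genuinely realises the trivial sequence with identity Hurewicz map, which is a standard consequence of $n$-connectedness and the fact that any abelian group admits a free resolution of length one.
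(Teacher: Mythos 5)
Your proof is correct and follows essentially the same route as the paper: both use the Moore space $M(G,n+1)$, observe that its $\Gamma$-sequence degenerates to $0\to 0\to G\xrightarrow{\id}G\to 0$, and identify $\End\big(\Gamma(X)\big)$ with $\End(G)$ by noting that any $f\in\End(G)$ lifts via $\Omega=f$. The extra detail you supply on constructing $M(G,n+1)$ from a free resolution is not in the paper but is standard and harmless.
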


\begin{proof}
	Consider the Moore space $X=M(G, n+1)$, which is an $A_n^2$-polyhedron whose $\Gamma$-sequence is 
	\[H_{n+2}(X) = 0\longrightarrow \Gamma_n^1\big(H_n(X)\big) = 0 \longrightarrow G \xrightarrow{\ =\ } G\longrightarrow 0.\]
	As $H_{n+1}(X)$ is the only non-trivial homology group of $X$, $\End\big(\Gamma(X)\big)\le\End\big(H_{n+1}(X)\big)=\End(G)$. Furthermore, for $f\in\End(G)$, if we take $\Omega = f$, then $(\id, f,\id)\in\End\big(\Gamma(X)\big)$. Thus $\End\big(\Gamma(X)\big)\cong\End(G)\cong R$.
\end{proof}

\begin{remark}\label{remark:negativeRealisability}
	Although powerful, Proposition \ref{proposition:end1} is not enough to provide a positive answer to Question \ref{question}. For example, $\F_p^2$ is not the endomorphism ring of any abelian group, for $p$ any prime. Indeed, it is not hard to prove that any abelian group only having endomorphisms of (additive) order $p$ must be a $\F_p$-vector space. But one such vector space will only have a finite endomorphism ring if it is of finite dimension. The endomorphism ring of $\F_p^k$ is the ring of square matrices of order $k$ over $\F_p$, thus $\big|\End(\F_p^k)\big| = p^{k^2}$. However, $p^{k^2}\ne p^2$ for any $k$. 
\end{remark}

Now, by exploiting the other non-necessarily free homology group of an $A_n^2$-polyhedron, we easily prove the following result, which in particular allows us to realise $\F_p^2$.

\begin{proposition}\label{proposition:end2}
	Let $R$ be a ring such that $R\cong\End(G_1)\times\End(G_2)$ for abelian groups $G_1$ and $G_2$. Then, there is an $A_n^2$-polyhedron in the stable range $X$ such that $\End\big(\Gamma(X)\big)\cong R$.
\end{proposition}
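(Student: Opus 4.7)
The plan is to realise $R$ as $\End\big(\Gamma(X)\big)$ for a space whose $\Gamma$-sequence places $G_1$ and $G_2$ in different homology dimensions, and whose middle term splits compatibly so that the two factors do not interact. Concretely, I would use Theorem~\ref{theorem:classification} to realise as some $A_n^2$-polyhedron $X$ in the stable range ($n\ge 4$) the abstract $\Gamma$-sequence
\[0\longrightarrow G_2\otimes\Z/2\xrightarrow{\ \iota\ } G_1\oplus (G_2\otimes\Z/2)\xrightarrow{\ \pi\ } G_1\longrightarrow 0,\]
where $\iota$ is the inclusion into the second summand and $\pi$ is the projection onto the first. The triple of homology groups $(0, G_1, G_2)$ has free top term, so the object is indeed in $\Gamma$-sequences$^{n+2}$ and Theorem~\ref{theorem:classification} applies. (Equivalently, one can take $X = M(G_1,n+1)\vee M(G_2,n)$, but working abstractly avoids having to compute $\pi_{n+1}$ of a wedge of Moore spaces.)

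Next I would read off $\End\big(\Gamma(X)\big)$ directly from the definition. A $\Gamma$-endomorphism of the sequence above is a triple $(0,f_1,f_2)$ with $f_1\in\End(G_1)$, $f_2\in\End(G_2)$, together with some $\Omega\in\End\big(G_1\oplus(G_2\otimes\Z/2)\big)$ making the defining square commute. Because $\iota$ and $\pi$ are the canonical inclusion and projection, the obvious block-diagonal choice $\Omega = f_1\oplus (f_2\otimes\Z/2)$ commutes with both maps, and therefore any pair $(f_1,f_2)$ lifts to a $\Gamma$-endomorphism. Conversely, the homological components of a $\Gamma$-endomorphism are by definition such a pair. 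Hence the assignment $(0,f_1,f_2)\mapsto (f_1,f_2)$ is a set-theoretic bijection $\End\big(\Gamma(X)\big)\to\End(G_1)\times\End(G_2)$, and by the description of the ring structure on $\End\big(\Gamma(X)\big)$ in Definition~\ref{definition:homH} (component-wise addition, composition of maps) it is a ring isomorphism, so $\End\big(\Gamma(X)\big)\cong R$.

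The only subtle point is the construction of the $\Gamma$-sequence: if instead one started, for example, from the wedge $M(G_1,n+1)\vee M(G_2,n)$ and tried to compute its $\Gamma$-sequence, one would still need to argue that the resulting extension splits in a way compatible with $\iota$ and $\pi$. Invoking Theorem~\ref{theorem:classification} directly side-steps this computation. I expect this to be the essentially only substantive step; the remainder is a formal unpacking. It is also worth flagging, as motivation for the later results, that the argument crucially uses the splitting of the middle term: in the general case the extension need not split, and this is precisely the source of the interaction between $\End(H_n(X))$ and $\End(H_{n+2}(X))$ that must be exploited in Proposition~\ref{proposition:pullback} and Theorem~\ref{theorem:end3}.
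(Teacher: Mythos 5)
Your proposal is correct and follows essentially the same route as the paper: both realise the split short exact $\Gamma$-sequence with trivial $H_{n+2}$, middle term $(G_i\otimes\Z/2)\oplus G_j$, and block-diagonal lift $\Omega$, the only (immaterial) difference being that you place $G_1$ in degree $n+1$ and $G_2$ in degree $n$ while the paper does the opposite. Your additional observation that injectivity of $\End\big(\Gamma(X)\big)\to\End(G_1)\times\End(G_2)$ is automatic because $\Omega$ is not part of the data of a $\Gamma$-morphism is exactly the content of the paper's remark that $\End\big(\Gamma(X)\big)\le\End(G_1)\times\End(G_2)$.
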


\begin{proof}
	Let $X$ be the $A_n^2$-polyhedron whose $\Gamma$-sequence is the split short exact sequence
	\[0\longrightarrow G_1\otimes\Z/2\longrightarrow (G_1\otimes\Z/2)\times G_2\longrightarrow G_2\longrightarrow 0.\]	
	First, note that $\End\big(\Gamma(X)\big)\le \End(G_1)\times\End(G_2)$. On the other hand, given any $f_1\in\End(G_1)$ and $f_2\in\End(G_2)$ we have a commutative diagram
	\begin{center}
	\begin{tikzpicture}
		\tikzset{node distance=0.15\textwidth, auto}
		\node(1) {0};
		\node[right of=1, xshift=0.4cm] (2) {$G_1\otimes\Z/2$};
		\node[right of=2, xshift=1cm] (3) {$(G_1\otimes\Z/2)\times G_2$};
		\node[right of=3, xshift=0.8cm] (4) {$G_2$};
		\node[right of=4] (5) {$0$};
		\node[below of=1, yshift=0.06cm] (6) {$0$};
		\node[below of=2, yshift=0.06cm] (7) {$G_1\otimes\Z/2$};
		\node[below of=3, yshift=0.06cm] (8) {$(G_1\otimes\Z/2)\times G_2$};
		\node[below of=4, yshift=0.06cm] (9) {$G_2$};
		\node[below of=5, yshift=0.06cm] (10) {$0,$};
		\draw[->,swap](1) to node {} (2);
		\draw[->](2) to node {} (3);
		\draw[->,swap](3) to node {} (4);
		\draw[->](4) to node {} (5);
		\draw[->](6) to node{} (7);
		\draw[->](7) to node {} (8);
		\draw[->](8) to node {} (9);
		\draw[->](9) to node {} (10);
		\draw[->] (2) to node {$f_1\otimes\Z/2$} (7);
		\draw[->] (3) to node {$(f_1\otimes\Z/2)\times f_2$} (8);
		\draw[->] (4) to node {$f_2$} (9);
	\end{tikzpicture}
\end{center}
so $(\id, f_2, f_1)\in\End\big(\Gamma(X)\big)$. Thus, $\End\big(\Gamma(X)\big)\cong\End(G_1)\times\End(G_2)$.
\end{proof}

However, Proposition \ref{proposition:end2} does not provide a positive answer to Question \ref{question}. Indeed, Remark \ref{remark:negativeRealisability} can easily be adapted to show that $\F_p^3$ is not isomorphic to the direct sum of two endomorphism rings of abelian groups. 

So far we have seen that the product of two endomorphism rings can be realised by choosing a $\Gamma$-sequence in which the homology groups are in a sense isolated in the sequence. We now show how we can take advantage of the interaction between the groups involved in a $\Gamma$-sequence to realise a pullback of endomorphism rings. Recall that given rings $R_1,R_2$ and  $Z$ and morphisms $f_1\colon R_1\to Z$ and $f_2\colon R_2\to Z$, the pullback of $f_1$ and $f_2$ is the ring $R_1\times_Z R_2 = \{(r_1,r_2)\in R_1\times R_2\mid f_1(r_1)=f_2(r_2)\}$.

\begin{proposition}\label{proposition:pullback}
	Let $X$ be an $A_n^2$-polyhedron such that $H_{n+1}(X)$ is the trivial group. Let $R_1 = \{f\in\End\big(H_{n+2}(X)\big)\mid f(\ker b_{n+2})\le \ker b_{n+2}\}$ and $R_2 = \{g\in\End\big(H_n(X)\big)\mid (g\otimes\Z/2)(\Im b_{n+2})\le \Im b_{n+2}\}$. Let $f_1\colon R_1\to\End\big(\Im(b_{n+2})\big)$ be the morphism induced by the first isomorphism theorem and let $f_2\colon R_2\to\End(\Im b_{n+2})$, $g\mapsto (g\otimes\Z/2)|_{\Im b_4}$. Then, $\End\big(\Gamma(X)\big) = R_1\times_{\End(\Im b_{n+2})} R_2$.
\end{proposition}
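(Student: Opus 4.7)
My plan is to unpack the definition of a $\Gamma$-endomorphism of $\Gamma(X)$ under the hypothesis $H_{n+1}(X)=0$ and show that the resulting data corresponds exactly to an element of the pullback. First I would use the hypothesis to simplify the $\Gamma$-sequence: exactness forces $\pi_{n+1}(X)\cong \big(H_n(X)\otimes\Z/2\big)/\Im b_{n+2}$ with $i_n$ the canonical projection, while the component $f_{n+1}$ of any $\Gamma$-endomorphism is trivially zero. Consequently, elements of $\End\big(\Gamma(X)\big)$ correspond to pairs $(f_{n+2},f_n)\in\End(H_{n+2}(X))\times\End(H_n(X))$ for which there exists some $\Omega\colon\pi_{n+1}(X)\to\pi_{n+1}(X)$ making the three squares of Definition \ref{definition:gammaSequencesn+2} commute.

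Next I would analyse each square. The rightmost is vacuous. The leftmost reads $(f_n\otimes\Z/2)\circ b_{n+2}=b_{n+2}\circ f_{n+2}$; evaluating it on $x\in\ker b_{n+2}$ yields $f_{n+2}(x)\in\ker b_{n+2}$, so $f_{n+2}\in R_1$, and evaluating on arbitrary $x$ shows $(f_n\otimes\Z/2)(\Im b_{n+2})\subseteq\Im b_{n+2}$, so $f_n\in R_2$. Moreover, through the identification $\Im b_{n+2}\cong H_{n+2}(X)/\ker b_{n+2}$ supplied by the first isomorphism theorem, the same equation read on $\Im b_{n+2}$ is literally $f_1(f_{n+2})=f_2(f_n)$. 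Conversely, these three conditions together recover the left-square identity by chasing a general $x\in H_{n+2}(X)$ through its class modulo $\ker b_{n+2}$.

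Finally, I would observe that once $f_n\in R_2$ the middle square admits a unique $\Omega$: since $f_n\otimes\Z/2$ preserves $\ker i_n=\Im b_{n+2}$ it descends to a unique endomorphism of the quotient $\pi_{n+1}(X)$, and surjectivity of $i_n$ forces any $\Omega$ closing the middle square to coincide with this descent. Therefore the assignment $(f_{n+2},0,f_n)\leftrightarrow(f_{n+2},f_n)$ is a bijection between $\End\big(\Gamma(X)\big)$ and $R_1\times_{\End(\Im b_{n+2})}R_2$; since both carry component-wise addition and composition, the bijection is a ring isomorphism. The one point requiring genuine care is verifying that the left-square commutativity is neither stronger nor weaker than the conjunction of membership in $R_1,R_2$ and the compatibility $f_1(f_{n+2})=f_2(f_n)$; but once $f_1$ and $f_2$ are read off from the first isomorphism theorem this equivalence becomes essentially tautological, so I do not anticipate a serious technical obstacle.
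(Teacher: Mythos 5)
Your proposal is correct and follows essentially the same route as the paper's (very terse) proof: both reduce everything to the commutativity of the left square, which encodes exactly membership in $R_1$, membership in $R_2$, and the compatibility $f_1(f_{n+2})=f_2(f_n)$. Your explicit construction of the unique $\Omega$ by descent to $\pi_{n+1}(X)\cong\big(H_n(X)\otimes\Z/2\big)/\Im b_{n+2}$ is a detail the paper leaves implicit, and it is the right way to justify the converse inclusion.
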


\begin{proof}
	It is clear that if $(f,0,g)\in \End\big(\Gamma(X)\big)$, then $f$ and $g\otimes\Z/2$ are respectively invariant on the kernel and image of $b_{n+2}$, for otherwise the diagram induced on the $\Gamma$-sequence could not be commutative. Then, the equality $b_{n+2}\circ f = (g\otimes\Z/2)\circ b_{n+2}$ can be reformulated as $f_1(f)=f_2(g)$. The result follows.
\end{proof}

The result above provides us with a different approach to realising the product of two endomorphism rings of abelian groups as $\End\big(\Gamma(X)\big)$ for $X$ an $A_n^2$-polyhedron, although this approach is more restrictive than Proposition \ref{proposition:end2}.

\begin{example}\label{example:anotherEnd2}
	Let $G_1$ and $G_2$ be abelian groups and assume that $G_1$ is free. Let $X$ be the $A_n^2$-polyhedron whose $\Gamma$-sequence is
	\[G_1\xrightarrow{\ 0\ } G_2\otimes\Z/2\xrightarrow{\ \id\ } G_2\otimes\Z/2\longrightarrow 0 \longrightarrow 0.\]
	In this case, $\ker b_{n+2} = G_1$ and $\Im b_{n+2} = \{0\}$, thus $R_1 = \End(G_1)$, $R_2 = \End(G_2)$ and $f_1$ and $f_2$ are both trivial maps. Consequently, $\End\big(\Gamma(X)\big) \cong\End(G_1)\times\End(G_2)$.
\end{example}

Nonetheless, Example \ref{example:anotherEnd2} gives us a major hint on how to prove Theorem \ref{theorem:end3}.

\begin{theorem}[Theorem \ref{theorem:end3}]
	Let $R$ be a ring such that $R\cong\End(G_1)\times\End(G_2)\times\End(G_3)$, where $G_1$ is a free abelian group and $G_2$ and $G_3$ are abelian groups. Then, there exists an $A_n^2$-polyhedron $X$ such that $\End\big(\Gamma(X)\big)\cong R$.
\end{theorem}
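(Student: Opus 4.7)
The plan is to extend Example \ref{example:anotherEnd2} by placing the third factor $\End(G_3)$ into the slot $H_{n+1}$, which was trivial in that example, while keeping the three homology groups from interacting with one another in the $\Gamma$-sequence. Concretely, fix any $n\ge 4$ and consider the sequence
\[
G_1\xrightarrow{\ 0\ } G_2\otimes\Z/2\xrightarrow{\ \iota\ }(G_2\otimes\Z/2)\oplus G_3\xrightarrow{\ \pi\ } G_3\longrightarrow 0,
\]
where $\iota$ and $\pi$ are the canonical inclusion and projection. This sequence is exact, and $G_1$ is free by hypothesis, so it defines an object of $\Gamma$-sequences$^{n+2}$. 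By Theorem \ref{theorem:classification} it is realised by an $A_n^2$-polyhedron $X$, which sits in the stable range since $n\ge 4$.

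I would then compute $\End\bigl(\Gamma(X)\bigr)$ in two steps. Because every $\Gamma$-endomorphism is determined by its triple $(f_{n+2},f_{n+1},f_n)$, the forgetful assignment
\[
\End\bigl(\Gamma(X)\bigr)\longrightarrow\End(G_1)\times\End(G_3)\times\End(G_2)
\]
is an injective ring homomorphism (addition and composition in $\End(\Gamma(X))$ act componentwise on the homology). For surjectivity, given an arbitrary triple $(f_{n+2},f_{n+1},f_n)$ I would define
\[
\Omega:=(f_n\otimes\Z/2)\oplus f_{n+1}\colon (G_2\otimes\Z/2)\oplus G_3\longrightarrow(G_2\otimes\Z/2)\oplus G_3.
\]
Compatibility of $\Omega$ with $\iota$ and $\pi$ is immediate from the direct-sum description, and compatibility of $f_{n+2}$ with $f_n\otimes\Z/2$ through $b_{n+2}$ is vacuous because $b_{n+2}=0$. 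This yields $\End\bigl(\Gamma(X)\bigr)\cong\End(G_1)\times\End(G_2)\times\End(G_3)\cong R$.

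The only real design choice, and arguably the sole non-routine step, is the joint decision to split $\pi_{n+1}$ as a direct sum and to take the boundary map $b_{n+2}$ to be zero. Splitting $\pi_{n+1}$ rules out any extension obstruction that might otherwise couple $\End(G_2)$ with $\End(G_3)$, while the vanishing of $b_{n+2}$ lets $\End(G_1)$ contribute freely, avoiding the pullback constraints of Proposition \ref{proposition:pullback}. The freeness assumption on $G_1$ is used exactly once, to license $G_1$ as an admissible $H_{n+2}$ in a $\Gamma$-sequence; I do not anticipate any genuine obstacle beyond writing this verification down carefully.
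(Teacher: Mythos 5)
Your construction is exactly the paper's: the same $\Gamma$-sequence $G_1\xrightarrow{0}G_2\otimes\Z/2\to(G_2\otimes\Z/2)\oplus G_3\to G_3\to 0$ with split right-hand part, realised via Theorem \ref{theorem:classification}, and the same componentwise verification that every triple $(f_1,f_3,f_2)$ lifts to a $\Gamma$-endomorphism via $\Omega=(f_2\otimes\Z/2)\oplus f_3$. The argument is correct; you merely spell out the commutativity checks that the paper delegates to Proposition \ref{proposition:end2} and Example \ref{example:anotherEnd2}.
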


\begin{proof}
	Let $X$ be the $A_n^2$-polyhedron whose $\Gamma$-sequence, \eqref{eq:gammaSeqX}, is
	\[G_1\xrightarrow{\ 0\ } G_2\otimes\Z/2\longrightarrow (G_2\otimes\Z/2)\times G_3\longrightarrow G_3 \longrightarrow 0,\]
	and where the the sequence 
	\[0\longrightarrow \operatorname{coker} b_{n+2} = G_2\otimes\Z/2\longrightarrow (G_2\otimes\Z/2)\times G_3\longrightarrow G_3 \longrightarrow 0\]
	is split. Following Proposition \ref{proposition:end2} and Example \ref{example:anotherEnd2} it is immediate to check that for any $f_i\in\End(G_i)$, $i=1,2,3$, $(f_1,f_3,f_2)\in\End\big(\Gamma(X)\big)$. As these are the only possible $\Gamma$-morphisms, $\End\big(\Gamma(X)\big)\cong\End(G_1)\times\End(G_2)\times\End(G_3)$.
\end{proof}

Note, however, that this result is not enough to provide a positive answer to Question \ref{question}, as the endomorphism ring of a non-trivial torsion-free group is always infinite. Thus, if we want to realise $\F_p^3$ using Theorem \ref{theorem:end3} we need to assume that $G_1 = 0$, so we fall in the situation of Proposition \ref{proposition:end2}.

\section{A negative answer to the realisability problem for \texorpdfstring{$\End\big(\Gamma(X)\big)$}{End(Gamma(X))}}

In this section we show that the framework of $A_n^2$-polyhedra is not suitable to produce a complete positive answer to Question \ref{question}. We do so by showing that $\F_p^3$ is not realisable as $\End\big(\Gamma(X)\big)$ for $X$ a finite type $A_n^2$-polyhedron and for $p$ any prime, see Lemmas \ref{lemma:not2} and \ref{lemma:2}.

As we are interested in the realisability of certain finite rings, it would be interesting to characterise the $A_n^2$-polyhedra whose endomorphism rings are finite. Since we are in the setting of polyhedra of finite type, we know from \cite[Corollary 3.2]{Joh72} that $\End(X)$ and $\End\big(H_*(X)\big)$ are isomorphic module torsion. As a consequence, homologically trivial maps must be torsion elements in $\End(X)$. Thus, $\End(X)$ and $\End\big(\Gamma(X)\big)$ are also isomorphic module torsion, from which we immediately deduce the following result.

\begin{corollary}\label{corollary:equivinfinite}
	Let $X$ be an $A_n^2$-polyhedron of finite type, $n\ge 4$. Then $\End(X)$ is finite if and only if $\End\big(\Gamma(X)\big)$ is so.
\end{corollary}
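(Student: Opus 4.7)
The plan is to combine the surjective ring homomorphism $H\colon\End(X)\to\End\big(\Gamma(X)\big)$ of Proposition \ref{proposition:isoGammaSeq} with the torsion comparison invoked in the paragraph preceding the statement. The forward direction is immediate: any surjective image of a finite ring is finite, so $\End(X)$ finite implies $\End\big(\Gamma(X)\big)$ finite.

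For the converse, I would first establish that $\End(X)$ is a finitely generated abelian group. Since $X$ is an $A_n^2$-polyhedron of finite type, its only potentially non-trivial homology groups $H_n(X)$, $H_{n+1}(X)$ and $H_{n+2}(X)$ are finitely generated, so $X$ is equivalent to a finite CW-spectrum with cells concentrated in three consecutive dimensions. A standard induction on the number of cells, using cofibre sequences together with the finite generation of the stable stems of spheres, then yields that $\{X,X\}$ is finitely generated. Now assume $\End\big(\Gamma(X)\big)$ is finite; in particular it is torsion, and the fact that $\End(X)$ and $\End\big(\Gamma(X)\big)$ are isomorphic modulo torsion (recorded in the paragraph above the statement as a consequence of \cite{Joh72}) forces $\End(X)$ to be torsion as well. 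Being a finitely generated torsion abelian group, $\End(X)$ is then finite.

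The main obstacle I anticipate is justifying the finite generation of $\End(X)$; however, this is a well-known property of finite CW-spectra and I would simply invoke the appropriate reference rather than reprove it. An alternative route, which avoids any explicit appeal to finite generation in the statement itself, is to work with $\ker H$ directly: by the result of \cite{Joh72} cited above, $\ker H$ consists of torsion elements, so if one knows it sits inside a finitely generated ambient group then it is finite, and combined with the isomorphism $\End(X)/\ker H\cong\End\big(\Gamma(X)\big)$ this gives $|\End(X)|\le |\ker H|\cdot\big|\End\big(\Gamma(X)\big)\big|<\infty$.
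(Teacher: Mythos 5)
Your argument is correct and takes essentially the same route as the paper: the corollary is deduced from Johnston's result that $\End(X)$ and $\End\big(H_*(X)\big)$, and hence $\End\big(\Gamma(X)\big)$, are isomorphic modulo torsion. The paper treats the final deduction as immediate, whereas you make explicit the one ingredient it leaves implicit, namely that $\{X,X\}$ is a finitely generated abelian group for a finite spectrum, which is exactly what is needed to pass from ``torsion'' to ``finite.''
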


We now prove some technical lemmas we will need in the sequel.  Let us recall the following concept from \cite{CosMenVir20}. 

\begin{definition}(\cite[Definition 4.3]{CosMenVir20})
	Let $f\colon H\to K$ be a morphism of abelian groups. We say that a non-trivial subgroup $A\le K$ is $f$-split if there are groups $B\le H$ and $C\le K$ such that $H\cong A\oplus B$, $K=A\oplus C$ and there exists some $g\colon B\to C$ such that $f = \id_A\oplus g\colon A\oplus B\to A\oplus C$.
\end{definition}

We then have the following result.

\begin{lemma}\label{lemma:splitend}
	Let $X$ be an $A_n^2$-polyhedron of finite type, $n\ge 4$, and let $A\le H_{n+1}(X)$ be an $h_{n+1}$-split subgroup of $H_{n+1}(X)$, thus $H_{n+1}(X)=A\oplus C$ for some abelian group $C$. Then $\End(A)\cong\{(0,f_A\oplus 0_C,0)\mid f_A\in\End(A)\}\le \End\big(\Gamma(X)\big)$.
\end{lemma}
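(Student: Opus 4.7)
The plan is to show, for each $f_A\in\End(A)$, that the triple $(0, f_A\oplus 0_C, 0)$ is a well-defined $\Gamma$-endomorphism of $\Gamma(X)$ by exhibiting an appropriate $\Omega\colon \pi_{n+1}(X)\to \pi_{n+1}(X)$, and then to observe that the assignment $f_A\mapsto (0,f_A\oplus 0_C,0)$ is an injective ring homomorphism whose image is precisely the claimed subset.

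First I would unpack the hypothesis. By definition of $h_{n+1}$-split, there exist decompositions $\pi_{n+1}(X) = A\oplus B$ and $H_{n+1}(X) = A\oplus C$ together with a homomorphism $g\colon B\to C$ such that $h_{n+1} = \id_A\oplus g$. For a given $f_A\in \End(A)$, I would set $\Omega = f_A\oplus 0_B$ on $A\oplus B$.

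Next I would verify that the triple $(0,f_A\oplus 0_C,0)$ together with this $\Omega$ satisfies the commutative diagram in Definition \ref{definition:gammaSequencesn+2}. The leftmost square is trivial because $f_{n+2} = 0$ and $f_n = 0$, so both composites vanish. For the middle square, $i_n\circ (f_n\otimes \Z/2) = 0$; on the other side, exactness gives $\Im i_n = \ker h_{n+1}$, which under $\pi_{n+1}(X) = A\oplus B$ coincides with $\{(0,b)\in A\oplus B : g(b)=0\}\subseteq B$, and $\Omega$ vanishes on $B$. For the right square, both $h_{n+1}\circ\Omega$ and $f_{n+1}\circ h_{n+1}$ send $(a,b)\in A\oplus B$ to $(f_A(a),0)\in A\oplus C$, as a direct computation using $h_{n+1} = \id_A\oplus g$ and $f_{n+1} = f_A\oplus 0_C$ shows.

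Finally, the correspondence $f_A\mapsto (0,f_A\oplus 0_C,0)$ is additive and multiplicative because the ring operations on $\End\big(\Gamma(X)\big)$ act componentwise, and it is injective since $f_A\oplus 0_C = 0$ forces $f_A = 0$. Its image is exactly the subring described in the statement, yielding $\End(A)\cong \{(0,f_A\oplus 0_C, 0)\mid f_A\in\End(A)\}\le\End\big(\Gamma(X)\big)$. I do not expect any genuine obstacle: the only point requiring thought is the construction of $\Omega$ and the verification that it respects $\Im i_n$, and both are essentially handed to us by the splitting hypothesis.
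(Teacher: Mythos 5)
Your proposal is correct and follows essentially the same route as the paper: both use the splitting $\pi_{n+1}(X)=A\oplus B$, $h_{n+1}=\id_A\oplus g$ to define $\Omega=f_A\oplus 0_B$, and both rest on the observation that exactness forces $\Im i_n\le B$ so that the middle square commutes. Your extra verification that $f_A\mapsto(0,f_A\oplus 0_C,0)$ is an injective ring homomorphism is a harmless elaboration of what the paper leaves implicit.
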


\begin{proof}
	By hypothesis, $H_{n+1}(X)=A\oplus C$, $\pi_{n+1}(X)\cong A\oplus B$ for some abelian group $B$, and  $h_{n+1} $ can be written as  $\id_A\oplus g$ for some morphism $g\colon B\to C$. Namely, the $\Gamma$-sequence of $X$ can be written as
	\[H_{n+2}(X) \xrightarrow{b_{n+2}} \big(H_n(X)\big)\otimes\Z/2 \xrightarrow{i_n} A\oplus B \xrightarrow{\id_A\oplus g} A\oplus C\to 0. \]
	Since $\id_A$ is an isomorphism, the exactness of the $\Gamma$-sequence implies that $\Im i_n\le B$. Thus, for every $f_A\in \End(A)$ we have a commutative diagram
	\begin{center}
		\begin{tikzpicture}
			\tikzset{node distance=0.15\textwidth, auto}
			\node(1) {$H_{n+2}(X)$};
			\node[right of=1, xshift=0.9cm] (2) {$\big(H_n(X)\big)\otimes\Z/2$};
			\node[right of=2, xshift=0.5cm] (3) {$A \oplus B$};
			\node[right of=3, xshift=0.8cm] (4) {$A \oplus C$};
			\node[right of=4] (5) {$0$};
			\node[below of=1, yshift=0.06cm] (6) {$H_{n+2}(X)$};
			\node[below of=2, yshift=0.06cm] (7) {$\big(H_n(X)\big)\otimes\Z/2$};
			\node[below of=3, yshift=0.06cm] (8) {$A\oplus B$};
			\node[below of=4, yshift=0.06cm] (9) {$A \oplus C$};
			\node[below of=5, yshift=0.06cm] (10) {$0.$};
			\draw[->](1) to node {$b_{n+2}$} (2);
			\draw[->](2) to node {$i_n$} (3);
			\draw[->](3) to node {$h_{n+1}$} (4);
			\draw[->](4) to node {} (5);
			\draw[->](6) to node{$b_{n+2}$} (7);
			\draw[->](7) to node {$i_n$} (8);
			\draw[->](8) to node {$h_{n+1}$} (9);
			\draw[->](9) to node {} (10);
			\draw[->](1) to node {$0$} (6);
			\draw[->] (2) to node {$0$} (7);
			\draw[->] (3) to node {$f_A\oplus 0$} (8);
			\draw[->] (4) to node {$f_A\oplus 0$} (9);
		\end{tikzpicture}
	\end{center}
	Consequently, $(0, f_A\oplus 0,0)\in\End\big(\Gamma(X)\big)$.
\end{proof}

The following lemmas are also necessary. 

\begin{lemma}\label{lemma:Hnpgp}
	Let $X$ be an $A_n^2$-polyhedron of finite type, $n\ge 4$, and suppose that $H_n(X) = A\oplus C$ where $A$ is a group of odd order. Then $\End(A)\cong \{(0,0,f\oplus 0)\mid f\in \End(A)\}\le\End\big(\Gamma(X)\big)$.	
\end{lemma}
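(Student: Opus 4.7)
The plan is to exploit the odd order hypothesis to kill the $H_n\otimes\Z/2$ contribution, which then makes the $\Gamma$-endomorphism construction essentially free. Since $|A|$ is odd, tensoring with $\Z/2$ annihilates $A$, giving the decomposition
\[H_n(X)\otimes\Z/2 \;=\; (A\otimes\Z/2)\oplus(C\otimes\Z/2) \;=\; C\otimes\Z/2.\]
Consequently, for any $f\in\End(A)$, the endomorphism $f\oplus 0_C \in \End(H_n(X))$ satisfies $(f\oplus 0_C)\otimes\Z/2 = (f\otimes\Z/2)\oplus 0 = 0$.

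Next, I would build the candidate $\Gamma$-endomorphism $(0,0,f\oplus 0_C)$ with auxiliary map $\Omega = 0\colon \pi_{n+1}(X)\to\pi_{n+1}(X)$. The verification reduces to checking that the diagram
\[\begin{array}{ccccccccc} H_{n+2}(X) & \xrightarrow{b_{n+2}} & H_n(X)\otimes\Z/2 & \xrightarrow{i_n} & \pi_{n+1}(X) & \xrightarrow{h_{n+1}} & H_{n+1}(X) & \to & 0\\ \downarrow 0 & & \downarrow 0 & & \downarrow 0 & & \downarrow 0 & &\\ H_{n+2}(X) & \xrightarrow{b_{n+2}} & H_n(X)\otimes\Z/2 & \xrightarrow{i_n} & \pi_{n+1}(X) & \xrightarrow{h_{n+1}} & H_{n+1}(X) & \to & 0 \end{array}\]
commutes, which is automatic since every vertical arrow is zero and the induced map on $H_n(X)\otimes\Z/2$ coming from $f\oplus 0_C$ is also zero by the previous step. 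This shows $(0,0,f\oplus 0_C)\in\End(\Gamma(X))$.

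Finally, I would observe that the assignment $f\mapsto (0,0,f\oplus 0_C)$ is additive, multiplicative (using $(f\oplus 0_C)\circ(g\oplus 0_C)=(fg)\oplus 0_C$), and injective (since $f\oplus 0_C=0$ forces $f=0$), so it identifies $\End(A)$ with the stated subset of $\End(\Gamma(X))$. There is no real obstacle here: the lemma is a direct exploitation of the odd-order hypothesis, and the proof closely parallels Lemma \ref{lemma:splitend}, differing only in that the vanishing of the middle map is now automatic from $A\otimes\Z/2=0$ rather than from an $h_{n+1}$-splitting.
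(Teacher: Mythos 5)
Your proof is correct and follows essentially the same route as the paper: both exploit the fact that $A\otimes\Z/2=0$ for $A$ of odd order, so that $(f\oplus 0_C)\otimes\Z/2$ vanishes and the all-zero choice of $\Omega$ makes the defining diagram commute trivially. The only addition you make is the explicit verification that $f\mapsto(0,0,f\oplus 0_C)$ is a ring monomorphism, which the paper leaves implicit.
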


\begin{proof}
	As $A$ is a group of odd order, $A\otimes \Z/2$ is the trivial group, thus $H_n(X)\otimes\Z/2 = C\otimes \Z/2$. Then, for any $f_A\in\End(A)$, $(f_A\oplus 0_C)\otimes \Z/2$ is the trivial map, giving raise to a commutative diagram 
	\begin{center}
		\begin{tikzpicture}
		\tikzset{node distance=0.15\textwidth, auto}
		\node(1) {$H_{n+2}(X)$};
		\node[right of=1, xshift=0.9cm] (2) {$C\otimes\Z/2$};
		\node[right of=2, xshift=0.5cm] (3) {$\pi_{n+1}(X)$};
		\node[right of=3, xshift=0.8cm] (4) {$H_{n+1}(X)$};
		\node[right of=4] (5) {$0$};
		\node[below of=1, yshift=0.06cm] (6) {$H_{n+2}(X)$};
		\node[below of=2, yshift=0.06cm] (7) {$C\otimes\Z/2$};
		\node[below of=3, yshift=0.06cm] (8) {$\pi_{n+1}(X)$};
		\node[below of=4, yshift=0.06cm] (9) {$H_{n+1}(X)$};
		\node[below of=5, yshift=0.06cm] (10) {$0.$};
		\draw[->](1) to node {$b_{n+2}$} (2);
		\draw[->](2) to node {} (3);
		\draw[->](3) to node {$h_{n+1}$} (4);
		\draw[->](4) to node {} (5);
		\draw[->](6) to node{$b_{n+2}$} (7);
		\draw[->](7) to node {} (8);
		\draw[->](8) to node {$h_{n+1}$} (9);
		\draw[->](9) to node {} (10);
		\draw[->](1) to node {$0$} (6);
		\draw[->] (2) to node {$0$} (7);
		\draw[->] (3) to node {$0$} (8);
		\draw[->] (4) to node {$0$} (9);
		\end{tikzpicture}
	\end{center}
	Consequently, $(0,0,f\oplus 0)\in\End\big(\Gamma(X)\big)$. The result follows.
\end{proof}

\begin{lemma}\label{lemma:Hn+1pgp}
	Let $X$ be an $A_n^2$-polyhedron of finite type, $n\ge 4$, and suppose that $H_{n+1}(X) = G\oplus H$, where $G$ is a $p$-group, $p\ne 2$. Then, $G$ is a $h_{n+1}$-split subgroup of $H_{n+1}(X)$ and $\End(G)\cong\{(0,f\oplus 0,0)\mid f\in \End(G)\}\le\End\big(\Gamma(X)\big)$.
\end{lemma}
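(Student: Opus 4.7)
The plan is to first verify that $G$ is an $h_{n+1}$-split subgroup of $H_{n+1}(X)$, and then invoke Lemma \ref{lemma:splitend} to obtain the claimed embedding of $\End(G)$ into $\End\big(\Gamma(X)\big)$. So essentially all of the work goes into producing the splitting.

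To produce the splitting, I would first observe that since $X$ is of finite type, each $H_i(X)$ is finitely generated, and chasing the $\Gamma$-sequence
\[H_{n+2}(X) \xrightarrow{b_{n+2}} H_n(X) \otimes \Z/2 \xrightarrow{\ i_n\ } \pi_{n+1}(X) \xrightarrow{h_{n+1}} H_{n+1}(X) \longrightarrow 0\]
shows that $\pi_{n+1}(X)$ is finitely generated as well. Set $K := \ker h_{n+1} = \Im i_n$; as a quotient of $H_n(X)\otimes\Z/2$, the group $K$ is a finitely generated $\Z/2$-vector space, hence annihilated by $2$. Letting $\tilde G := h_{n+1}^{-1}(G)$, restriction of $h_{n+1}$ yields a short exact sequence
\[0 \longrightarrow K \longrightarrow \tilde G \longrightarrow G \longrightarrow 0.\]
Because $G$ is a finitely generated $p$-group with $p$ odd, it decomposes as a finite direct sum of cyclic groups of the form $\Z/p^{a_i}$, and $\operatorname{Ext}^1(\Z/p^a, \Z/2^b) = 0$ whenever $p \ne 2$. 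Consequently $\operatorname{Ext}^1(G, K) = 0$, the extension splits, and I can write $\tilde G = G' \oplus K$ with $h_{n+1}$ restricting to an isomorphism $G' \xrightarrow{\cong} G$.

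To promote this to a decomposition of all of $\pi_{n+1}(X)$, I would let $\tilde H := h_{n+1}^{-1}(H)$. Then $G' \cap \tilde H \subseteq G' \cap h_{n+1}^{-1}(G \cap H) = G' \cap K = 0$, while $G' + \tilde H \supseteq \tilde G + \tilde H = h_{n+1}^{-1}(G + H) = \pi_{n+1}(X)$, so that $\pi_{n+1}(X) = G' \oplus \tilde H$. Identifying $G'$ with $G$ via $h_{n+1}|_{G'}$, this gives $h_{n+1} = \id_G \oplus g$ with $g := h_{n+1}|_{\tilde H}\colon \tilde H \to H$, which is exactly the condition for $G$ to be $h_{n+1}$-split. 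An application of Lemma \ref{lemma:splitend} then delivers the embedding $\End(G) \cong \{(0, f\oplus 0, 0) \mid f \in \End(G)\} \le \End\big(\Gamma(X)\big)$.

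The main (indeed only) delicate point is the $\operatorname{Ext}^1$ vanishing used to split the middle extension; this is precisely where the assumption $p \ne 2$ enters, since coprimality between the order of $G$ and the exponent of $K$ is what kills the relevant Ext group. Everything else is a routine pullback diagram chase, aided by the finite generation of $\pi_{n+1}(X)$ that finite type guarantees.
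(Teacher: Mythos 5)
Your proof is correct and takes essentially the same approach as the paper's: both arguments exploit that $\ker h_{n+1} = \Im i_n$ is a $2$-group while $G$ is a $p$-group for odd $p$, produce a decomposition $\pi_{n+1}(X)\cong G\oplus B$ under which $h_{n+1}=\id_G\oplus g$, and then conclude via Lemma \ref{lemma:splitend}. The paper obtains the splitting by noting that $h_{n+1}$ induces an isomorphism on $p$-torsion subgroups, whereas you split the extension $0\to K\to h_{n+1}^{-1}(G)\to G\to 0$ using $\operatorname{Ext}^1(G,K)=0$; this is only a difference in packaging (your construction of the complement $\tilde H$ is, if anything, more explicit than the paper's).
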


\begin{proof}
	Consider the $\Gamma$-sequence of $X$,
	\[H_{n+2}(X) \xrightarrow{b_{n+2}} \big(H_n(X)\big)\otimes\Z/2 \xrightarrow{i_n} \pi_{n+1}(X) \xrightarrow{h_{n+1}} G\oplus H\to 0. \]
	Since $H_n(X)\otimes\Z/2$ is a $2$-group, so is $\Im i_n$. Consequently, by exactness of the $\Gamma$-sequence, $h_{n+1}$ must induce an isomorphism between the $p$-torsion subgroups of $\pi_{n+1}(X)$ and $H_{n+1}(X)$. Thus, $G$ must be a summand of the $p$-torsion subgroup of $\pi_{n+1}(X)$, so there exists $K\le \pi_{n+1}(X)$ such that $\pi_{n+1}(X)\cong G\oplus K$, and we can further assume that $h_{n+1}$ restricts to $G$ as the identity map. Consequently, $G$ is an $h_{n+1}$-split subgroup of $H_{n+1}$, and the result follows from Lemma \ref{lemma:splitend}.
\end{proof}

We now have the necessary tools to prove our main result, Theorem \ref{theorem:main}. We split the proof into Lemma \ref{lemma:not2} and Lemma \ref{lemma:2}, as we consider the cases of even and odd primes separately. We begin with the later. 

\begin{lemma}\label{lemma:not2}
	Let $p$ be an odd prime and $n\ge 4$ be an integer. An $A_n^2$-polyhedron of finite type $X$ such that $\End\big(\Gamma(X)\big)\cong\F_p^3$ does not exist.
\end{lemma}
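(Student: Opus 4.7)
The plan is to translate the ring-theoretic condition $\End\big(\Gamma(X)\big)\cong\F_p^3$ into strong homological constraints on $X$ and then reduce to a numerical impossibility. The multiplicative identity of $\End\big(\Gamma(X)\big)$ is the identity $\Gamma$-morphism $\big(\id_{H_{n+2}(X)}, \id_{H_{n+1}(X)}, \id_{H_n(X)}\big)$, so the relation $p\cdot 1 = 0$ in $\F_p^3$ together with the component-wise addition in $\End\big(\Gamma(X)\big)$ forces $p\cdot\id_{H_i(X)} = 0$ for $i\in\{n,n+1,n+2\}$. Hence every $H_i(X)$ is an $\F_p$-vector space; combined with the free abelian requirement of Definition \ref{definition:gammaSequencesn+2}, this gives $H_{n+2}(X) = 0$, and the finite-type hypothesis yields $H_{n+1}(X)\cong\F_p^a$ and $H_n(X)\cong\F_p^b$ for some finite $a,b\ge 0$.

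The second step exploits the hypothesis that $p$ is odd: since $\F_p\otimes\Z/2 = 0$, the term $H_n(X)\otimes\Z/2$ vanishes, and the $\Gamma$-sequence of $X$ collapses to
\[0\longrightarrow 0\longrightarrow \pi_{n+1}(X)\xrightarrow{\ h_{n+1}\ } \F_p^a\longrightarrow 0,\]
so $h_{n+1}$ is an isomorphism. Consequently, any $\Gamma$-endomorphism is determined by a pair $(f_{n+1}, f_n) \in \End(\F_p^a) \times \End(\F_p^b)$: the compatibilities with $b_{n+2}$ and $i_n$ are vacuous, and the commuting lift $\Omega = h_{n+1}^{-1}\circ f_{n+1}\circ h_{n+1}$ always exists. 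This gives a ring isomorphism
\[\End\big(\Gamma(X)\big)\cong \End(\F_p^a)\times\End(\F_p^b).\]

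Finally, comparing cardinalities, the isomorphism $\End(\F_p^a)\times\End(\F_p^b) \cong \F_p^3$ forces $p^{a^2+b^2} = p^3$, i.e.\ $a^2+b^2 = 3$, which admits no non-negative integer solutions; this is the desired contradiction.

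The main conceptual point is the observation in the second step that the assumption $p\ne 2$ completely decouples the three homology groups in the $\Gamma$-sequence; once this decoupling is secured, the rest reduces to a routine cardinality check. The harder case $p=2$, where $H_n(X)\otimes\Z/2$ need not vanish and the groups genuinely interact through $b_{n+2}$ and $i_n$, is precisely what requires the separate treatment of Lemma \ref{lemma:2}.
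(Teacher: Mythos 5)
Your proof is correct, but it follows a genuinely different and more direct route than the paper's. Your key move is the observation that a ring isomorphism $\End\big(\Gamma(X)\big)\cong\F_p^3$ forces the identity $\big(\id_{H_{n+2}(X)},\id_{H_{n+1}(X)},\id_{H_n(X)}\big)$ to have additive order $p$, and since $\End\big(\Gamma(X)\big)$ embeds in $\End\big(H_{n+2}(X)\big)\times\End\big(H_{n+1}(X)\big)\times\End\big(H_n(X)\big)$ with component-wise addition, this makes every homology group an elementary abelian $p$-group at a stroke. This bypasses the paper's entire preparatory apparatus: Corollary \ref{corollary:equivinfinite} (which rests on Johnston's theorem to get finiteness of the homology), and Lemmas \ref{lemma:Hnpgp} and \ref{lemma:Hn+1pgp} (which the paper uses to exclude $q$-torsion for $q\notin\{2,p\}$ and then to split off the $p$-primary parts). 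The paper, having only excluded primes other than $2$ and $p$, must still carry along possible $2$-torsion summands $C$ and $C'$ and argue that the resulting subring $R\le\End(C)\oplus\End(C')$ is an additive $2$-group and hence trivial; your characteristic-$p$ observation eliminates $2$-torsion from the outset, so for odd $p$ the term $H_n(X)\otimes\Z/2$ vanishes, $h_{n+1}$ is an isomorphism, and $\End\big(\Gamma(X)\big)\cong M_a(\F_p)\times M_b(\F_p)$ exactly as you say, after which $a^2+b^2=3$ has no solution. Your endgame is also more self-contained: the paper concludes by invoking the fact (Remark \ref{remark:negativeRealisability}) that $\F_p^2$ and $\F_p^3$ are not endomorphism rings of abelian $p$-groups, whereas your cardinality count needs nothing beyond $\big|M_a(\F_p)\big|=p^{a^2}$. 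What the paper's longer route buys is reusable structure: Lemmas \ref{lemma:splitend}--\ref{lemma:Hn+1pgp} are stated in general form and serve elsewhere, and the paper's analysis describes $\End\big(\Gamma(X)\big)$ for arbitrary finite-type $X$ with finite endomorphism ring rather than only those of prime characteristic. What your route buys is a stronger conclusion essentially for free: for odd $p$, any finite-type $A_n^2$-polyhedron whose $\Gamma$-endomorphism ring has characteristic $p$ satisfies $\End\big(\Gamma(X)\big)\cong M_a(\F_p)\times M_b(\F_p)$, which rules out many rings beyond $\F_p^3$. Your closing remark correctly identifies why $p=2$ needs the separate argument of Lemma \ref{lemma:2}.
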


\begin{proof}
	Assume that one such $A_n^2$-polyhedron exists. Then, since $\End\big(\Gamma(X)\big)$ is finite, by Corollary \ref{corollary:equivinfinite} we can assume that the homology groups of $X$ are finite. As $H_{n+2}(X)$ is a free abelian group, this implies that $H_{n+2}(X)=0$.

	Now by Lemmas \ref{lemma:Hnpgp} and \ref{lemma:Hn+1pgp}, $H_n(X)$ and $H_{n+1}(X)$ must be a direct sum of a $2$-group and a $p$-group. Indeed, if either of the two groups had a summand $G$ which is a $q$-group, $q\not\in\{2,p\}$, then $\End(G)\le\End\big(\Gamma(X)\big)$. But $\End(G)$ contains elements of (additive) order $q$, giving raise to a contradiction.

	Write $H_{n+1}(X) = A\oplus C$ where $A$ is a $p$-group and $C$ is a $2$-group. By Lemma \ref{lemma:Hn+1pgp}, $A$ is $h_{n+1}$-split, so there exists $B\le\pi_{n+1}(X)$ such that $\pi_{n+1}(X)=A\oplus B$ and $h_{n+1}=\id_A\oplus g$ for some group homomorphism $g\colon B\to C$. Similarly, write $H_n(X) = A'\oplus C'$, where $A'$ is a $p$-group and $C'$ is a $2$-group. Then, the $\Gamma$-sequence of $X$ is 
	\[0\longrightarrow C'\otimes\Z/2 \xrightarrow{\ i_n\ } A\oplus B\xrightarrow{\id_A\oplus g} A\oplus C\longrightarrow 0.\]
	Furthermore, as $C'\otimes\Z/2$ is a $2$-group and $A$ is a $p$-group, $p\ne 2$, it follows that $\Im i_n\le B$. 

	Thus, the elements of $\End\big(\Gamma(X)\big)$ are of the form $(0, f_A\oplus f_C, f_{A'}\oplus f_{C'})$, where $f_A\in\End(A)$ and $f_{A'}\in\End(A')$ are any two endomorphisms, and $f_C\in\End(C)$ and $f_{C'}\in\End(C')$ are endomorphisms giving raise to a commutative diagram
	\begin{center}
		\begin{tikzpicture}
			\tikzset{node distance=0.15\textwidth, auto}
			\node(1) {$0$};
			\node[right of=1, xshift=0.9cm] (2) {$C'\otimes\Z/2$};
			\node[right of=2, xshift=0.5cm] (3) {$B$};
			\node[right of=3, xshift=0.8cm] (4) {$C$};
			\node[right of=4] (5) {$0$};
			\node[below of=1, yshift=0.06cm] (6) {$0$};
			\node[below of=2, yshift=0.06cm] (7) {$C'\otimes\Z/2$};
			\node[below of=3, yshift=0.06cm] (8) {$B$};
			\node[below of=4, yshift=0.06cm] (9) {$C$};
			\node[below of=5, yshift=0.06cm] (10) {$0,$};
			\draw[->](1) to node {} (2);
			\draw[->](2) to node {} (3);
			\draw[->](3) to node {$g$} (4);
			\draw[->](4) to node {} (5);
			\draw[->](6) to node{} (7);
			\draw[->](7) to node {} (8);
			\draw[->](8) to node {$g$} (9);
			\draw[->](9) to node {} (10);
			\draw[->] (2) to node {$f_{C'}\otimes\Z/2$} (7);
			\draw[->] (3) to node {$\Omega$} (8);
			\draw[->] (4) to node {$f_C$} (9);
		\end{tikzpicture}
	\end{center}
	for some $\Omega\in\End(B)$. As a consequence, if we let $R$ denote the ring of endomorphisms $(f_C,f_{C'})\in\End(C)\oplus\End(C')$ giving raise to such a commutative diagram, then $\End\big(\Gamma(X)\big)\cong\End(A)\oplus\End(A')\oplus R$.

	Now notice that $R\le \End(C)\oplus\End(C')$, where $C$ and $C'$ are $2$-groups. Thus the group underlying $R$ is itself a $2$-group. As we want $\End\big(\Gamma(X))$ to be a $p$-group, we can assume that $R$ is trivial. Finally, we obtain that $\End\big(\Gamma(X)\big)\cong\End(A)\oplus\End(A')$ where $A$ and $A'$ are $p$-groups. As neither $\F_p^2$ nor $\F_p^3$ are realisable as $\End(A)$ for $A$ any $p$-group, we finally deduce that $\End\big(\Gamma(X)\big)$ cannot be isomorphic to $\F_p^3$.
\end{proof}

We now prove the result for the prime $p=2$. 

\begin{lemma}\label{lemma:2}
	Let $n\ge 4$ and $m\ge 1$ be integers. An $A_n^2$-polyhedron of finite type $X$ such that $\End\big(\Gamma(X)\big)\cong\F_{2^m}^3$ does not exist.
\end{lemma}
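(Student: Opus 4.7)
The plan is to mirror the opening of Lemma \ref{lemma:not2} to reduce the problem to a linear algebra question about elementary abelian $2$-groups, and then produce nilpotent elements in $\End\big(\Gamma(X)\big)$ that contradict the fact that $\F_{2^m}^3$ is reduced. First, Corollary \ref{corollary:equivinfinite} lets us assume the homology of $X$ is finite, and since $H_{n+2}(X)$ is free abelian this forces $H_{n+2}(X)=0$. Lemmas \ref{lemma:Hnpgp} and \ref{lemma:Hn+1pgp} then imply that any $q$-torsion summand ($q$ an odd prime) of $V:=H_n(X)$ or $W:=H_{n+1}(X)$ would embed an $\End(G)$ of odd additive order inside $\End\big(\Gamma(X)\big)$, contradicting that $\F_{2^m}^3$ is a $2$-group; hence $V$ and $W$ are finite $2$-groups.

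The key new step is the upgrade from $2$-group to elementary abelian. Set $U:=V\otimes\Z/2$ and let $\xi\in\operatorname{Ext}^1(W,U)$ be the class of the extension $0\to U\to\pi_{n+1}\to W\to 0$ coming from the $\Gamma$-sequence. Since $2_V\otimes\Z/2=0$, the pair $(2_V,0)$ is a $\Gamma$-morphism with lift $\Omega=0$; if $V$ has exponent at least $4$ then $2_V$ is a non-zero nilpotent in $\End(V)$, so $(2_V,0)$ is a non-zero nilpotent in $\End\big(\Gamma(X)\big)$, a contradiction. Analogously, $(0,2_W)$ is a $\Gamma$-morphism because the obstruction to the existence of a lift $\Omega$ is $(2_W)^*\xi=2\xi$, which vanishes: $U$ has exponent $2$, hence so does $\operatorname{Ext}^1(W,U)$. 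If $W$ has exponent at least $4$ we again obtain a non-zero nilpotent. Therefore $V\cong\F_2^a$ and $W\cong\F_2^b$ are both elementary abelian.

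With $V=\F_2^a$ and $W=\F_2^b$ elementary abelian, $U=V$ and $\operatorname{Ext}^1(W,V)\cong\Hom_{\F_2}(W,V)$. Identifying $\xi$ with a linear map $\phi\colon W\to V$, the lifting criterion for a pair $(f_n,f_{n+1})$ becomes $f_n\phi=\phi f_{n+1}$, so
\[\End\big(\Gamma(X)\big)=\bigl\{(f_n,f_{n+1})\in M_a(\F_2)\times M_b(\F_2)\mid f_n\phi=\phi f_{n+1}\bigr\}.\]
Choosing bases adapted to $\ker\phi$ and $\Im\phi$ we may assume $\phi=\bigl(\begin{smallmatrix}I_r & 0\\ 0 & 0\end{smallmatrix}\bigr)$ with $r=\rank\phi$. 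A direct block computation then shows that every $(f_n,f_{n+1})\in\End\big(\Gamma(X)\big)$ has $f_n$ block upper-triangular and $f_{n+1}$ block lower-triangular, sharing a common upper-left block $C\in M_r(\F_2)$, with otherwise unconstrained off-diagonal blocks $A_{12}$, $B_{21}$ and diagonal blocks $A_{22}\in M_{a-r}(\F_2)$, $B_{22}\in M_{b-r}(\F_2)$.

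The conclusion is a short case analysis. If $r>0$ and $a>r$, taking $A_{12}\neq 0$ with every other block zero produces a non-zero square-zero element of $\End\big(\Gamma(X)\big)$; symmetrically for $b>r$ using $B_{21}$. Hence $r>0$ forces $a=b=r$, and $\End\big(\Gamma(X)\big)\cong M_r(\F_2)$, which equals $\F_2$ when $r=1$ and contains the nilpotent $E_{12}$ when $r\ge 2$. If instead $r=0$ the extension is split and $\End\big(\Gamma(X)\big)=M_a(\F_2)\times M_b(\F_2)$, whose reducedness forces $a,b\le 1$ and hence $\bigl|\End\big(\Gamma(X)\big)\bigr|\le 4$. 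In every case $\End\big(\Gamma(X)\big)$ cannot match $\F_{2^m}^3$, which has at least $8$ elements. The main obstacle, relative to Lemma \ref{lemma:not2}, is the elementary-abelian reduction for $W$, since Lemma \ref{lemma:Hn+1pgp} is not available at $p=2$; the workaround is to use the multiplication-by-$2$ endomorphisms of $V$ and $W$, combined with the observation that $\operatorname{Ext}^1(W,U)$ is annihilated by $2$ as soon as $U$ has exponent $2$.
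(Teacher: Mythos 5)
Your proof is correct, but it follows a genuinely different route from the paper's. The paper dispatches $m>1$ by quoting \cite[Theorem 1.1]{CosMenVir20}: no finite type $A_n^2$-polyhedron has $\E(X)/\E_*(X)$ a non-trivial finite group of odd order, whereas the unit group of $\F_{2^m}^3$ has odd order $(2^m-1)^3>1$; for $m=1$ it invokes the classification from \cite{CosMenVir20} of the four $\Gamma$-sequences with trivial automorphism group and checks each by hand. You instead compute $\End\big(\Gamma(X)\big)$ directly. After the same reduction to finite homology with $H_{n+2}(X)=0$ and no odd torsion, your passage to elementary abelian groups via the nilpotents $(0,0,2_V)$ and $(0,2_W,0)$ is valid (the lift $\Omega=2\cdot\id_{\pi_{n+1}}$ realises the latter directly, confirming your $\operatorname{Ext}$ computation), and one can even shortcut this step: $2\cdot(\id,\id,\id)=0$ in a ring of characteristic $2$ already forces every homology group to have exponent $2$. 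Your identification of $\End\big(\Gamma(X)\big)$ with the intertwiner ring $\{(f_n,f_{n+1})\mid f_n\phi=\phi f_{n+1}\}$ via the class $\phi\in\Hom(W,V)\cong\operatorname{Ext}^1(W,V)$ is the standard criterion for the existence of a map of extensions covering a given pair, and the block computation correctly yields $M_r(\F_2)$, $M_a(\F_2)\times M_b(\F_2)$, or a ring containing a nonzero square-zero element --- none of which is a reduced ring with at least $8$ elements. What your approach buys is self-containedness and uniformity: it avoids the classification results of \cite{CosMenVir20}, treats all $m$ at once, and in passing shows that the only finite reduced rings of characteristic $2$ arising as $\End\big(\Gamma(X)\big)$ for $X$ of finite type are $0$, $\F_2$ and $\F_2^2$. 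What the paper's approach buys is brevity, at the cost of leaning on substantial prior work.
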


\begin{proof}
	Recall from Remark \ref{remark:CMV2applicable} that the group of units of $\End\big(\Gamma(X)\big)$ is isomorphic to $\E(X)/\E_*(X)$. In \cite[Theorem 1.1]{CosMenVir20} we proved that, for $n\ge 3$, an $A_n^2$-polyhedron of finite type such that $\E(X)/\E_*(X)$ is a non-trivial finite group of odd order does not exist. But the group of units of $\F_{2^m}^3$ has order $(2^m-1)^3$, which is an odd number greater than 1 whenever $m>1$. Then $\F_{2^m}^3$ is not realisable, whenever $m>1$.

	Assume now that there is an $A_n^2$-polyhedron of finite type $X$ such that $\End\big(\Gamma(X)\big)\cong\F_2^3$. Then, $\E(X)/\E_*(X)\cong\Aut\big(\Gamma(X)\big)$ is trivial. As a consequence of the development in \cite[Theorem 1.1, Lemma 4.5]{CosMenVir20}, only four $\Gamma$-sequences without non-trivial automorphisms exist. Let us consider each of them separately.

	Assume first that $H_n(X)=\Z/2$ and $H_{n+1}(X)=0$. Then $\End\big(\Gamma(X)\big)\le\End(\Z/2)\cong\Z/2$. The same happens when $H_n(X)=0$ and $H_{n+1}(X) = \Z/2$. Thus these sequences cannot be used to realise $\F_2^3$.

	Consider now $X$ an $A_n^2$-polyhedron where $H_n(X)=H_{n+1}(X)=\Z/2$ and whose $\Gamma$-sequence is
	\[0\to\Z/2\to\Z/2\oplus\Z/2\to\Z/2\to 0.\]
	We know from the proof of Proposition \ref{proposition:end2} that $\End\big(\Gamma(X)\big)\cong \F_2^2$, so this sequence cannot be used either.

	The remaining one is associated to an $A_n^2$-polyhedron $X$ with $H_n(X)=H_{n+1}(X)=\Z/2$, its $\Gamma$-sequence being
	\[0 \to \Z/2\to\Z/4\to\Z/2\to 0.\]
	Therefore, $\End\big(\Gamma(X)\big)\le\End(\Z/2)\oplus\End(\Z/2)\cong(\Z/2)^2$. We conclude that $\F_2^3$ is not realisable.
\end{proof}

Theorem \ref{theorem:main} is then an immediate consequence of Lemma \ref{lemma:not2} and Lemma \ref{lemma:2}.

\begin{remark}
	As a consequence of \cite[Theorem 1.1]{CosMenVir20}, if $X$ is an $A_n^2$-polyhedron of finite type in the stable range, the group of units of $\End\big(\Gamma(X)\big)$ is either the trivial group or it has elements of even order. This comes at no surprise, since it is known that for $X$ a path-connected, finite CW-complex in the stable range, $1_X\not\simeq -1_X$, \cite[Proposition 3]{Kah79}, thus $-1_X$ is an element of order two in the group of units of $\End(X)$. In order for $-1_X$ to induce a trivial map on $\Gamma(X)$, the homology groups of $X$ must be elementary abelian $2$-groups, providing a natural explanation of the results obtained in \cite[\textsection4]{CosMenVir20}. 
\end{remark}


\end{document}